\newtheorem{theorem}{Theorem}
\newtheorem{lemma}[theorem]{Lemma}
\newtheorem{question}[theorem]{Question}
\newtheorem{proposition}[theorem]{Proposition}
\newtheorem*{remark}{Remark}
\newtheorem{faketheorem}[theorem]{Fake Theorem}
\newcommand{\RR}{\mathbb{R}}
\newcommand{\ZZ}{\mathbb{Z}}
\newcommand{\la}{\langle}
\newcommand{\rr}{\rangle}
\newcommand{\vol}{\operatorname{vol}}
\newcommand{\af}{\rightarrow}
\newcommand{\dd}{\mathrm{d}}
\DeclareMathOperator{\arccot}{\mathrm{arccot}}
\DeclareMathOperator{\Real}{\mathrm{Re}}
\DeclareMathOperator{\Res}{\mathrm{Res}}
\title{Simplex slicing: an asymptotically-sharp lower bound}
\author{Colin Tang}
\address{Department of Mathematical Sciences \\ Carnegie Mellon University \\ Wean Hall 6113 \\ Pittsburgh, Pennsylvania 15213 \\ United States of America}
\email{cstang@andrew.cmu.edu}
\date{\today}
\begin{document}

\keywords{Regular simplex, volume of sections, minimal section, exponential distribution}

\subjclass{52A40, 46B20}

\begin{abstract}
We show that for the regular $n$-simplex, the $1$-codimensional central slice that's parallel to a facet will achieve the minimum area (up to a $1-o(1)$ factor) among all $1$-codimensional central slices, thus improving the previous best known lower bound (Brzezinski 2013) by a factor of $\frac{2\sqrt{3}}{e} \approx 1.27$. In addition to the standard technique of interpreting geometric problems as problems about probability distributions and standard Fourier-analytic techniques, we rely on a new idea, mainly \emph{changing the contour of integration} of a meromorphic function. 
\end{abstract}

\maketitle

\section{Introduction}\label{sec:real-intro}

We are broadly interested in the following question: if $K$ is a given convex body in $\RR^n$, and $H$ is some hyperplane of codimension $1$, can we obtain good upper and lower bounds on the area of the intersection $H \cap K$? (``Area'' meaning the $(n-1)$-dimensional volume.) We refer to $H \cap K$ as a \textbf{(hyperplane) section} of $K$. If $H$ is constrained to pass through the centroid of $K$, we call $H \cap K$ a \textbf{central section}.



Let us consider the case where $K$ is the hypercube $Q_n \coloneqq [-\frac12, \frac12]^n$ and $H$ is constrained to pass through the origin (which is the centroid of $Q_n$). The first sharp result was discovered independently by Hadwiger (\cite{hadwiger}) and Hensley (\cite{hensley}), who showed that the area of $H \cap Q_n$ is minimized by taking $H$ perpendicular to the vector $(1,0,0,0,\dots,0)$. (In this case $H \cap Q_n$ is isometric to $Q_{n-1}$.) Hensley's proof relied on a technique which has become commonplace: \textbf{\textit{converting the original geometric problem into the language of probability distributions}}. Later in the same paper, Hensley used another technique which has become commonplace, namely the use of the \textbf{\textit{Fourier inversion formula}} to deal with tricky convolutions.\footnote{See \cite{fourier} for more exposition on the method of Fourier analysis in convex geometry.} Ball used both aforementioned techniques in \cite{ball-cube} to prove that the area of $H \cap Q_n$ is maximized by taking $H$ perpendicular to the vector $(1,1,0,0,0,0,\dots,0)$. (In this case $H \cap Q_n$ is a rectangular prism with one side $\sqrt{2}$ and all other sides of length $1$.) Incidentally, Ball's result immediately implies that for $n \ge 10$, any central sections of $Q_n$ will have area less than that of any central section of $B(\Gamma(\frac{n}{2}+1)^{1/n}/\sqrt{\pi})$, the $n$-ball of volume $1$ (see \cite{ball-busemann-petty}). This gives a negative answer to the Busemann-Petty problem for all $n \ge 10$.

Now let us turn our attention to the case where $K$ is the regular simplex $\Delta_n$ (of side length $\sqrt{2}$). If $H$ is not constrained to pass through the center of $\Delta_n$, Webb noted in the introduction of \cite{webb} that some results of Ball (involving maximum-volume inscribed ellipsoids) can be used to show that the area of $H \cap \Delta_n$ is maximized by letting $H$ be a facet of $\Delta_n$. Fradelizi (in \cite{fradelizi}) has given a different proof of this same result. However, if $H$ is constrained to pass through the center of $\Delta_n$, Webb showed in the same paper (\cite{webb}) that the area of $H \cap \Delta_n$ is maximized by taking $H$ to pass through all but two vertices of $\Delta_n$ (a quantitative version of Webb's result is given in \cite{myroshnychenko}). Webb gave two proofs of this result, one of which used both the technique of probability distributions and the technique of the Fourier inversion formula. This leaves open the following question, which is the focus of this paper:

\begin{question}\label{q:main}
    If $H$ is constrained to pass through the center of $\Delta_n$, then what is the minimum possible value of the area of $H \cap \Delta_n$? In other words, what is the minimum central section of the regular simplex? Furthermore, which choices of $H$ attain this minimum value?
\end{question}

It has been conjectured that the minimum in Question \ref{q:main} is attained by taking $H$ to be $H_{\text{facet}}$, where $H_\text{facet}$ is any hyperplane through the center of $\Delta_n$ and parallel to a facet (this is Conjecture 4 in \cite{tkocz-survey}). Filliman states this conjecture without proof in Section 3, part (h) of \cite{filliman}. Dirksen showed (Theorem 1.3 (i) in \cite{dirksen}) that $H_\text{facet}$ is a local minimizer; i.e. for all $H$ sufficiently close to $H_\text{facet}$, the area of $H \cap \Delta_n$ is at least that of $H_\text{facet} \cap \Delta_n$. In Theorem 1.1 of \cite{brzezinski}, Brzezinski showed a lower bound that is within a factor of $1.27$ of the conjectured minimizer; i.e. Brzezinski proved that for any $H$, the area of $H \cap \Delta_n$ is at least $\frac{e}{2\sqrt{3}} \approx \frac{1}{1.27}$ times that of $H_\text{facet} \cap \Delta_n$. One reason why the question of minimizing $H \cap \Delta_n$ is harder than the analogous question for $Q_n$ is that Hensley was able to utilize the fact that $Q_n$ is a symmetric convex body, so the probability distribution resulting from slicing $Q_n$ is a symmetric probability distribution. The simplex is not a symmetric convex body, so the resulting probability distribution is in general not symmetric.

In this paper we almost resolve the question of minimizing $H \cap \Delta_n$; i.e. we prove a lower bound that is within a factor of $1 - o(1)$ of the conjectured minimizer (the notation $o(1)$ is with respect to taking $n \nearrow \infty$).

\begin{theorem}\label{thm:main}
    For each hyperplane $H$ passing through the center of $\Delta_n$, we have \begin{equation*}\begin{split}\vol_{n-1}(H \cap \Delta_n) &\ge \frac{1}{e}\sqrt{\frac{n+1}{n}}\left(\frac{n+1}{n}\right)^{n-1} \vol_{n-1}(H_{\mathrm{facet}} \cap \Delta_n) \\ &= \frac{1}{e}\frac{\sqrt{n+1}}{(n-1)!}.\end{split}\end{equation*}
\end{theorem}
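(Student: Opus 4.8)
The plan is to follow the standard recipe: convert the slicing problem into a statement about densities, apply Fourier inversion, and then estimate the resulting oscillatory integral — with the novel twist of deforming the contour of integration. Let me think about how this goes.

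First, parametrize. Write $\Delta_n = \{x \in \RR^{n+1}_{\ge 0} : \sum x_i = 1\}$ (the standard simplex, which is a scaled/translated copy of the "regular simplex of side $\sqrt 2$" in the ambient hyperplane), so that its centroid is $c = \frac{1}{n+1}(1,\dots,1)$. A central hyperplane $H$ is $\{x : \langle x - c, \theta\rangle = 0\}$ for a unit vector $\theta$ in the ambient hyperplane $\sum \theta_i = 0$ direction. The key identity (this is the Ball/Webb computation) is that $\vol_{n-1}(H \cap \Delta_n)$ equals a normalizing constant (depending only on $n$) times $f_\theta(0)$, where $f_\theta$ is the density of the random variable $\langle X, \theta\rangle$ and $X$ is uniform on $\Delta_n$. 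So the theorem reduces to: $f_\theta(\langle c,\theta\rangle) \ge (1-o(1))\, f_{e}(\langle c, e\rangle)$, where $e$ corresponds to the facet direction $(1,-\frac1n,\dots,-\frac1n)$-type vector; equivalently, after rescaling $\theta$ so that $\langle c,\theta\rangle$ is normalized away, we need a lower bound on the density at the mean.

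Now apply Fourier inversion: $f_\theta(\text{mean}) = \frac{1}{2\pi}\int_{\RR} \prod_{j} \widehat{\mu}(t\, a_j)\,\dd t$ up to phases, where $a_1,\dots,a_{n+1}$ are the coordinates of $\theta$ (suitably normalized, $\sum a_j = 0$, $\sum a_j^2 = 1$), and $\widehat{\mu}$ is the characteristic function of the relevant one-dimensional marginal — which for the simplex is essentially $(e^{it}-1)/(it)$ or a close variant. The facet case is where all but one $a_j$ are equal; the integrand then has a single "heavy" factor and a product of $n$ near-equal light factors, which after rescaling $t$ gives something comparable to $\int (\frac{\sin t}{t})^{?}$-type decay, and one computes the reference value $f_e(\text{mean})$ explicitly (this produces the $\frac1e\sqrt{\frac{n+1}{n}}(\frac{n+1}{n})^{n-1}$ constant, since $(1+\frac1n)^n \to e$). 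The substance of the theorem is showing no other $\theta$ can beat this. The classical approach (Hensley, Ball, Brzezinski) is to bound $|\widehat\mu(a_j t)|$ pointwise and sum/integrate, but the loss of the factor $\frac{2\sqrt3}{e}$ in Brzezinski comes precisely from a wasteful pointwise bound near the origin.

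The main new idea — and the main obstacle — is to \emph{not} integrate over the real line but to push the contour into the complex plane. The integrand $\Phi(t) = \prod_j \frac{e^{i a_j t}-1}{i a_j t}$ (times the phase that centers it at the mean) is a meromorphic function of $t$; after accounting for the phase shift that recenters at the centroid, the integrand decays in an appropriate direction, and the poles at $t = 0$ (removable) pose no problem, so one can slide the contour to $\Imag t = $ const (or tilt it) and pick up no residues. On the shifted contour the integrand is no longer oscillatory but genuinely exponentially decaying/growing factor-by-factor, and the product $\prod_j$ can be bounded below by a convexity / AM-GM argument in the exponents: one writes $\log|\Phi|$ as a sum $\sum_j g(a_j t)$ for an explicit $g$, uses $\sum a_j = 0$ and $\sum a_j^2 = 1$ to control the sum via the worst case (all mass on one coordinate, i.e. the facet configuration), and verifies $g$ is convex enough on the relevant region for Jensen to point the right way. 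I expect the delicate part is choosing the contour (likely a horizontal line $\Imag t = \delta$, or a ray, whose height must be tuned to $n$) so that simultaneously (i) the tails converge, (ii) no pole is crossed, and (iii) the integrand's lower bound on the new contour, when integrated, still recovers the sharp constant rather than losing a factor — this is exactly where a pointwise estimate on the old real contour bled the extra $\frac{2\sqrt3}{e}$, and making the contour shift recover it is the crux. A secondary technical point is handling configurations of $\theta$ that are "close to" the facet direction separately (where Dirksen's local analysis already applies) versus those bounded away from it (where the contour bound has room to spare), and patching the two regimes to get a clean $1-o(1)$.
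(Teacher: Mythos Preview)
Your outline has the right three-step skeleton (probability interpretation, Fourier inversion, contour shift), but two of the steps diverge from what actually works, and the divergence is not cosmetic.

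First, the integrand. The relevant one-dimensional building block is not the sinc-type factor $\frac{e^{it}-1}{it}$ coming from a uniform marginal; Webb's identity expresses the section volume as the density at $0$ of $\sum_j a_j Y_j$ with $Y_j$ i.i.d.\ \emph{exponential}, so the characteristic function is $F_u(t)=\prod_j \frac{e^{iu_jt}}{1+iu_jt}$. This matters because the exponential version has simple poles on the imaginary axis (at $t=i/u_j$), which is precisely what makes a controlled contour deformation possible and what produces the exact value $1/e$ via a residue for the extremal direction $v=(1)\in\RR^1$.

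Second, and more seriously, the contour you propose---a horizontal line $\Imag t=\delta$ or a tilted ray---will not do the job. On any such line $F_u$ is still genuinely complex-valued and oscillatory, so a pointwise \emph{lower} bound on the integrand is meaningless, and the convexity/Jensen step you sketch has no target. The paper's key idea is to use a \emph{$u$-dependent} contour $\gamma_u=\{x+iy_u(x)\}$, defined implicitly as the zero set of a lifted argument function $\Phi_u(x,y)$, on which $F_u$ is by construction a \emph{positive real number} at every point. Only then does a pointwise comparison make sense: one shows $\tilde F_u(x):=F_u(x+iy_u(x))\ge \tilde F_v(x)$ for the single-coordinate vector $v=(1)$, and integrates. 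That pointwise bound is obtained not by convexity of a one-variable function but by (i) deriving an ODE that $y_u$ satisfies, (ii) proving $|y_u|\le y_v$ via a monotonicity argument for $\Phi_v$ along $\gamma_u$, and (iii) a Cauchy--Schwarz step (the paper flags this as the ``curious inequality'') that collapses the $u$-dependence in $\frac{\dd}{\dd x}\log\tilde F_u$ to $-\frac{x^2+y_u^2}{x}$. None of this is visible from a fixed horizontal shift, and your proposed AM--GM on $\sum_j g(a_j t)$ does not produce the sharp constant.

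Finally, the patching with Dirksen's local result is unnecessary: the argument above is uniform in $u$ and already gives the clean $1/e$ bound for every unit vector simultaneously, with no case split near the facet direction.
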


Note that $\frac{1}{e}\sqrt{\frac{n+1}{n}}\left(\frac{n+1}{n}\right)^{n-1} \nearrow 1$ as $n \nearrow \infty$, so Theorem \ref{thm:main} is best possible (asymptotically speaking).

Our main new idea is to \textbf{\textit{move the contour of integration}}. Following the aforementioned techniques of probability distributions and of the Fourier inversion formula, we rewrite Theorem \ref{thm:main} as a statement that the probability density of a sum of independent exponential random variables takes a value of at least $\frac{1}{e}$ at zero. Specifically, we show the following, which may have independent interest:

\begin{theorem}\label{thm:prob-distro}
    Let $Y_1,Y_2,\dots,Y_{n+1}$ be i.i.d. standard exponential random variables (mean $1$). For each unit vector $u \in \RR^{n+1}$, we have that the density at $0$ of the random variable $Z \coloneqq u_1(Y_1-1) + u_2(Y_2-1)+\dots+u_{n+1}(Y_{n+1}-1)$ is at least $\frac{1}{e}$. Equality holds if $u$ is of form $(1,0,0,0,\dots,0)$.
\end{theorem}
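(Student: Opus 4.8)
The natural first step is to pass to characteristic functions. Since $Y_j-1$ has characteristic function $s\mapsto e^{-is}/(1-is)$ and the $Y_j$ are independent, $Z$ has characteristic function $F(t):=\prod_{j=1}^{n+1}\frac{e^{-itu_j}}{1-itu_j}$, so Fourier inversion gives
\[
f_Z(0)=\frac{1}{2\pi}\int_{-\infty}^{\infty}F(t)\,\dd t ,
\]
an absolutely convergent integral provided at least two of the $u_j$ are nonzero; otherwise $u=\pm e_j$ and $f_Z(0)=e^{-1}$ directly. Replacing $u$ by $-u$ sends $Z$ to $-Z$ and leaves $f_Z(0)$ unchanged, so I may assume $M:=\max_j u_j>0$; and since $f_Z(0)$ is continuous in $u$, I may assume the maximum is attained uniquely, say $u_1=M>u_j$ for all $j\neq1$, recovering the rest by continuity.

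The key idea is to move the contour. Regard $F$ as a meromorphic function, with a simple pole at each $-i/u_j$; the one nearest $\RR$ from below is $-i/M$. Shifting the line of integration from $\RR$ down to $\RR-i(1/M+\varepsilon)$ — the horizontal connectors at $\pm\infty$ vanish because $F(t)=O(|t|^{-2})$ — crosses only the pole at $-i/M$, and evaluating its residue gives
\[
f_Z(0)=\frac{e^{-1}}{M}\prod_{j\neq1}\frac{e^{-u_j/M}}{1-u_j/M}+\frac{1}{2\pi}\int_{\RR-i(1/M+\varepsilon)}F(t)\,\dd t .
\]
With $W:=\sum_{j\neq1}u_j(Y_j-1)$ (independent of $Y_1$), the product equals $\EE[e^{W/M}]$, using $\EE[e^{s(Y_j-1)}]=e^{-s}/(1-s)$ for $s<1$ and $u_j/M<1$; since $\EE W=0$, Jensen gives $\EE[e^{W/M}]\ge1$, and $M\le\|u\|_2=1$, so the leading term is already $\ge e^{-1}$. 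The remaining integral — generally negative — is most transparently computed probabilistically: conditioning on $Y_1$ in $Z=M(Y_1-1)+W$ and using $f_{M(Y_1-1)}(x)=\tfrac{e^{-1}}{M}e^{-x/M}$ for $x>-M$ yields
\[
f_Z(0)=\frac{e^{-1}}{M}\int_{-\infty}^{M}e^{w/M}f_W(w)\,\dd w=\frac{e^{-1}}{M}\,\EE\!\left[e^{W/M};\,W<M\right],
\]
so the contour integral equals $-\tfrac{e^{-1}}{M}\EE[e^{W/M};W\ge M]$ — precisely what continuing to push the line downward past the lower poles of $F$ reassembles. Theorem~\ref{thm:prob-distro} is thus equivalent to $\EE[e^{W/M};W<M]\ge M$.

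To finish, rescale: with $a_j:=u_j/M$ (so $a_j\le1$) and $V:=W/M=\sum_{j\neq1}a_j(Y_j-1)$, which has mean $0$ and $\sum a_j^2=M^{-2}-1$, the claim becomes $\EE[e^{V};V<1]\ge M=(1+\sum a_j^2)^{-1/2}$. Since $\EE V=0$, Jensen gives $\EE[e^{V}]\ge1$; explicitly $\EE[e^{V}]=\prod_{j\neq1}\frac{e^{-a_j}}{1-a_j}$. It thus suffices to control the overshoot $\EE[e^{V};V\ge1]\le\EE[e^{V}]-M$, and for this I would use exponential tilting: for any $\theta\ge0$ with $(1+\theta)a_j<1$ for every $j$,
\[
\EE\!\left[e^{V};V\ge1\right]\le e^{-\theta}\,\EE\!\left[e^{(1+\theta)V}\right]=e^{-\theta}\prod_{j\neq1}\frac{e^{-(1+\theta)a_j}}{1-(1+\theta)a_j},
\]
and then choose $\theta$ and compare against $\prod_{j\neq1}\frac{e^{-a_j}}{1-a_j}-M$, the factorization $\frac{e^{-s}}{1-s}=\exp\bigl(\sum_{k\ge2}s^k/k\bigr)$ making the dependence on the $a_j$ explicit.

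This final comparison is the main obstacle, and it has essentially no slack: as $M\uparrow1$, i.e. $V\to0$, i.e. near the extremal direction $e_1$, the inequality $\EE[e^{V};V<1]\ge M$ degenerates to an equality, so $\theta$ must be chosen nearly optimally — of order $\log(1/\Var V)$, so that the damping factor $e^{-\theta}$ just offsets the growth in $\theta$ of the tilted product — and one must simultaneously keep track of the negative $a_j$ (coordinates $u_j<0$, for which $s\mapsto\frac{e^{-s}}{1-s}$ behaves quite differently than for $s>0$). Carrying out this balancing argument, and then removing the genericity assumption by continuity, is where the bulk of the work lies.
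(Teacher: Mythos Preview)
Your reduction is correct and clean: the convolution identity
\[
f_Z(0)=\frac{e^{-1}}{M}\,\EE\!\left[e^{W/M};\,W<M\right]
\]
is valid, and the theorem is indeed equivalent to $\EE[e^{V};V<1]\ge M$ with $V=\sum_{j\neq1}a_j(Y_j-1)$, $a_j=u_j/M\le1$. But this is a reformulation, not a proof, and the exponential tilting attack you sketch has a structural obstruction, not merely a bookkeeping burden. The Chernoff bound $\EE[e^{V};V\ge1]\le e^{-\theta}\EE[e^{(1+\theta)V}]$ requires $(1+\theta)a_j<1$ for every $j$; when a second coordinate of $u$ approaches the maximum, say $u_2\nearrow M$, you have $a_2\nearrow1$, so $\theta$ is forced to $0$ and the bound degenerates to $\EE[e^{V};V\ge1]\le\EE[e^{V}]$, which is vacuous. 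Concretely, at $u=(1/\sqrt2,1/\sqrt2,0,\dots,0)$ the reduced inequality reads $\EE[e^{Y-1};Y<2]=2/e\ge 1/\sqrt2$, true with room to spare, yet your tilting parameter gives nothing. So the ``balancing argument'' you defer is precisely where a new idea is needed; the method as outlined cannot close the gap uniformly over $u$.

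The paper avoids this overshoot issue entirely by moving the contour not to a horizontal line past a pole, but to a curve $\gamma_u=\{x+iy_u(x)\}$ on which $F_u$ is \emph{positive real}. It then proves the pointwise comparison $F_u(x+iy_u(x))\ge F_v(x+iy_v(x))$ for the extremal $v=(1)$, by first bounding $|y_u|\le y_v$ (a differential argument showing $\Phi_v$ is nondecreasing along $\gamma_u$) and then controlling $\frac{\dd}{\dd x}\log\tilde F_u$ via Cauchy--Schwarz applied to the sums $\sum_j\frac{x}{x^2+(1/u_j-y_u)^2}$ and $\sum_j\frac{-y_u+u_j(x^2+y_u^2)}{x^2+(1/u_j-y_u)^2}$. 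Integrating $\tilde F_v$ gives exactly $1/e$. Your residue computation is more elementary, and the probabilistic identity is a nice observation, but the paper's choice of contour is what makes the comparison with the extremal case go through uniformly.
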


Using Fourier inversion, we rewrite Theorem \ref{thm:prob-distro} as a statement that the integral of the characteristic function of $Z$ is at least $\frac{1}{e}$. However, $F$ is highly oscillatory along the real axis, and is difficult to bound. We simply move the contour of integration to some new contour $\gamma$, so that $F$ takes only positive real values along $\gamma$; now we can bound the integral more easily.

Our proof appears to be highly dependent on the idiosyncracies of the regular simplex, and do not generalize well to other convex bodies. In particular, there is a curious inequality (Equations (\ref{eq:cauchy-1}) and (\ref{eq:cauchy-2})), and it is not clear if there is an analogue when the simplex is replaced with some other convex body.

A failed approach that is worth mentioning is the following false statement, strengthening Theorem \ref{thm:prob-distro}:

\begin{faketheorem}\label{fakethm}
Let $f_X$ be the density function of a log-concave random variable $X$, such that $X$ has mean $0$ and variance $1$. Then $f_X(0) \ge \frac{1}{e}$, with equality holding if $X$ is equal to $Y-1$, where $Y$ is a standard exponential random variable (mean $1$).
\end{faketheorem}

This is indeed a strengthening of Theorem \ref{thm:prob-distro}, since the random variable $Z$ from Theorem \ref{thm:prob-distro} is the sum of independent log-concave random variables, and is hence log-concave. However, the Fake Theorem \ref{fakethm} is indeed false, as can be seen by taking $X$ to be the uniform distribution with mean $0$ and variance $1$; we end up with $f_X(0) = \frac{1}{\sqrt{12}} < \frac{1}{e}$. (In fact, $\frac{1}{\sqrt{12}}$ is precisely the minimum possible value of $f_X(0)$, as $X$ varies over all log-concave random variables with mean $0$ and variance $1$.)


Finally, we will note in passing that, letting $K$ be a convex body and $H$ be a hyperplane as before, people have considered constraining $H$, not so that it passes through the centroid of $K$, but so that it is at a distance exactly $t$ from the centroid of $K$, where $t>0$ is a fixed constant. In this case $H \cap K$ is called a \textbf{noncentral section}. K\"{o}nig showed (Theorem 1.1 of \cite{konig}) that if $t$ is reasonably large, the area of $H \cap \Delta_n$ is maximized by taking $H$ to be parallel to a facet of $\Delta_n$. Liu and Tkocz investigated the cross-polytope (\cite{tkocz-cross}), and Moody, Stone, Zach, and Zvavitch investigated the hypercube (\cite{moody}).


In the remainder of this section we will sketch the proof of Theorem \ref{thm:main}. 
\begin{itemize}
\item In Section \ref{sec:prob}, we first use a result of Webb (Equation (\ref{eq:prob-interpret})) to reduce Theorem \ref{thm:main} to a question about the distribution of a sum of independent centered exponential random variables (Theorem \ref{thm:minimizer-prob-strong}, which is the same as Theorem \ref{thm:prob-distro}). This constitutes a use of the technique of probability distributions.

\item In Section \ref{sec:fourier}, we take the Fourier transform, and the problem reduces to showing that the integral of an explicit meromorphic function $F_a$ along the real line is at least $\frac{1}{e}$ (Theorem \ref{thm:1-e}). Here, $F_a$ is the characteristic function of the probability distribution. This constitutes a use of the technique of the Fourier inversion formula.

\item In Section \ref{sec:change-contour}, we change the contour of integration; this is our main new idea. As noted previously, $F_a$ is too difficult to bound along the real axis, so we instead change the contour of integration from the real axis to some path $\gamma_a$, with the property that $F_{a}$ always attains positive real values on $\gamma_a$. Since the integral of a meromorphic function does not depend on the contour used\footnote{As long as one can move one contour to the other contour without passing over any poles.}, the problem reduces to showing that the integral of $F_a$ along $\gamma_{a}$ is at least $\frac{1}{e}$ (Theorem \ref{thm:1-e-2}). 

\item In Section \ref{sec:pointwise}, we give a lower bound for the values of $F_{a}$ along $\gamma_{a}$. Essentially, we let $v$ denote the vector $(1) \in \RR^1$, and we show that $F_{a}(x) \ge F_v(x)$ for each $x$ (Theorem \ref{thm:pointwise}). We do this by showing that the derivative of $F_{a}$ is at least the derivative of $F_{v}$ (Equation (\ref{eq:log-ineq})). There is a curious inequality involved (Equations (\ref{eq:cauchy-1}) and (\ref{eq:cauchy-2})). We obtain the desired $\frac{1}{e}$ lower-bound by evaluating the integral of $F_{v}$.
\end{itemize}

\section{Definitions}\label{sec:def}

Let $n \ge 2$ be a positive integer. Define the standard basis vectors $e_1,e_2,\dots,e_{n+1}$ in $\RR^{n+1}$, such that $e_j$ denotes the vector with a $1$ in the $j$th coordinate, and zeroes in all other entries. We use $\Delta_n$ to denote the regular $n$-simplex of side length $\sqrt{2}$, embedded in $\RR^{n+1}$ as follows: $\Delta_n$ is the convex hull of the points $e_1,e_2,\dots,e_{n+1}$. Equivalently, \[\Delta_n = \left\{(x_1,x_2,x_3,\dots,x_{n+1}) \in \RR^{n+1} \bigm\vert  x_1+x_2+x_3+\dots+x_{n+1} = 1, \text{ and } x_i \ge 0 \text{ for each } i\right\}.\]

For any set $Q$, we let $\vol_k(Q)$ denote the $k$-dimensional Hausdorff measure of $Q$. Then the volume of $\Delta_n$ is given by \[\vol_n(\Delta_n) = \frac{\sqrt{n+1}}{n!}.\] 

Let $\vec{\mathbf{1}} \in \RR^{n+1}$ denote the unit vector $\frac{1}{\sqrt{n+1}}(1,1,1,\dots,1) = \left(\frac{1}{\sqrt{n+1}},\frac{1}{\sqrt{n+1}},\frac{1}{\sqrt{n+1}},\dots,\frac{1}{\sqrt{n+1}}\right)$. 
Observe that $\frac{1}{\sqrt{n+1}}\vec{\mathbf{1}}$ is the center of the regular simplex $\Delta_n$. Furthermore, given any two vectors $v, w \in \RR^{n+1}$, we write $\la v,w \rr$ for their dot product, and we say that $v \perp w$ if $\la v, w \rr = 0$, i.e. if $v, w$ are orthogonal. 

Throughout this paper, let $a \in \RR^{n+1}$ denote an arbitrary unit vector satisfying $a \perp \vec{\mathbf{1}}$. Let $H(a)$ denote the hyperplane perpendicular to $a$; i.e. \[H(a) = \left\{x \in \RR^{n+1} \bigm\vert x \perp a\right\}.\] Note that since $a \perp \vec{\mathbf{1}}$, we have that $H(a)$ contains the point $\frac{1}{\sqrt{n+1}}\vec{\mathbf{1}}$; i.e. $H(a)$ always passes through the center of $\Delta_n$.

The intersection $H(a) \cap \Delta_n$ is an $(n-1)$-dimensional polytope embedded in $\RR^{n+1}$. We call this object the \textbf{central section of $\Delta_n$ in the direction of $a$}, and denote it by $T(a)$. As in Question \ref{q:main}, we will be concerned with finding the minimum possible value of $\vol_{n-1}(T(a))$ as $a$ varies over all unit vectors perpendicular to $\vec{\mathbf{1}}$.

    \begin{figure}
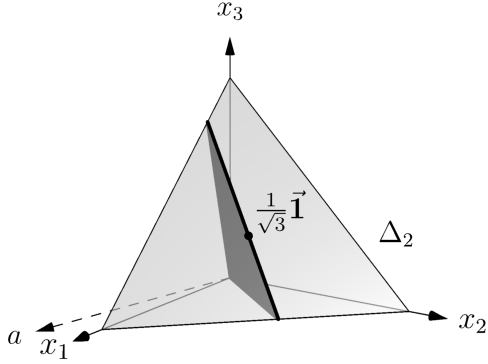

    \centering
\begin{asy}
settings.outformat = "pdf";
settings.prc = false;
//settings.render = 8;
import three;
import graph;

unitsize(3cm);
currentprojection = perspective(6,4,2);
real ax = 1.2;
real sc = 1.1;
draw((0,0,0)--(ax,0,0),arrow=Arrow3());
label("$x_1$",sc*(ax,0,0));
draw((0,0,0)--(0,ax,0),arrow=Arrow3());
label("$x_2$",sc*(0,ax,0));
draw((0,0,0)--(0,0,ax),arrow=Arrow3());
label("$x_3$",sc*(0,0,ax));
path3 outline = (1,0,0)--(0,1,0)--(0,0,1)--cycle;
draw(outline);
draw(surface(outline),gray(0.8)+opacity(0.6));
label("$\Delta_2$",sc*(-0.09,0.79,0.3));
real p1 = -10;
real p2 = -3;
real p3 = 13;

triple a = (p3,p1,p2)/sqrt(p1*p1+p2*p2+p3*p3);
draw((0,0,0)--a,arrow=Arrow3(),dashed);
label("$a$",sc*a);
triple a1 = (p2,0,-p3)/(p2-p3);
triple a2 = (p1,-p3,0)/(p1-p3);
path3 plane = (0,0,0)--a1--a2--cycle;
draw(plane,linewidth(1.5));
draw(surface(plane),black);

dot((1/3,1/3,1/3),linewidth(3.5));
label("$\frac{1}{\sqrt{3}} \vec{\mathbf{1}}$", sc*(0.2,0.4,0.4));
\end{asy}
        \caption{The gray triangle represents $\Delta_2$, and the black triangle represents $H(a)$. The thick line at the intersection of the two triangles represents the hyperplane section $T(a)$.}
    \end{figure}

For convenience, with $a$ defined as above, we will write the entries of $a$ as \begin{equation}\label{eq:a-coords}a = (a_1, a_2, \dots, a_{n+1}).\end{equation} Observe that \[a_1+a_2+\dots+a_{n+1} = 0\] and \[a_1^2+a_2^2+\dots+a_{n+1}^2=1.\] 




Define the unit vector \begin{equation*}\begin{split}a_{\text{facet}} &\coloneqq \frac{n}{\sqrt{n(n+1)}}e_1-\frac{1}{\sqrt{n(n+1)}}(e_2+e_3+\dots+e_{n+1}) \\ &= \left(\frac{n}{\sqrt{n(n+1)}},-\frac{1}{\sqrt{n(n+1)}},-\frac{1}{\sqrt{n(n+1)}},-\frac{1}{\sqrt{n(n+1)}},\dots,-\frac{1}{\sqrt{n(n+1)}}\right).\end{split}\end{equation*} Observe that $H(a_{\text{facet}})$ corresponds exactly to the hyperplane $H_{\text{facet}}$ from Section \ref{sec:real-intro}, which corresponds to the conjectured minimum central section (as in Question \ref{q:main}). We compute that $T(a_{\mathrm{facet}})$ is the convex hull of the $n$ vertices \[\frac{1}{n+1}e_1 + \frac{n}{n+1}e_2, \frac{1}{n+1}e_1 + \frac{n}{n+1}e_3, \frac{1}{n+1}e_1 + \frac{n}{n+1}e_4, \dots, \frac{1}{n+1}e_1 + \frac{n}{n+1}e_{n+1}\] and thus \[\vol_{n-1}(T(a_\text{facet})) = \frac{\sqrt{n}}{(n-1)!}\left(\frac{n}{n+1}\right)^{n-1},\] verifying the equality in the second half of Theorem \ref{thm:main}. 
Thus Theorem \ref{thm:main} can be rewritten as follows:

\begin{theorem}\label{thm:minimizer}
    For each unit vector $a \in \RR^{n+1}$ satisfying $a \perp \vec{\mathbf{1}}$, we have $\vol_{n-1}(T(a)) \ge \frac{\sqrt{n+1}}{(n-1)!}\frac{1}{e}$.
\end{theorem}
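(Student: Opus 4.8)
The plan is to translate the geometric statement into probability and then into Fourier analysis. First I would invoke the identity of Webb (Equation~(\ref{eq:prob-interpret})): it expresses $\vol_{n-1}(T(a))$ as $\frac{\sqrt{n+1}}{(n-1)!}$ times the value at $0$ of the probability density of $\sum_{i=1}^{n+1} a_i(Y_i-1)$, where $Y_1,\dots,Y_{n+1}$ are i.i.d.\ standard exponentials. Because $a\perp\vec{\mathbf 1}$ means $\sum_i a_i=0$, this is precisely the random variable $Z$ of Theorem~\ref{thm:prob-distro}. Hence Theorem~\ref{thm:minimizer} is equivalent to the assertion $f_Z(0)\ge\frac1e$, i.e.\ to Theorem~\ref{thm:prob-distro} applied to the unit vector $a$. (One only ever needs the case $\sum_i u_i=0$; the equality vector $(1,0,\dots,0)$ of Theorem~\ref{thm:prob-distro} is not itself an admissible slicing direction, which is why the bound in Theorem~\ref{thm:minimizer} turns out to be sharp only asymptotically.)

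Next I would take Fourier transforms. Since $\EE[e^{it a_j Y_j}]=(1-ia_jt)^{-1}$ and $\sum_j a_j=0$, the characteristic function of $Z$ is the meromorphic function $F_a(t)=\prod_{j=1}^{n+1}(1-ia_jt)^{-1}$, whose poles $t=-i/a_j$ all lie off the real axis, and Fourier inversion gives $f_Z(0)=\frac1{2\pi}\int_{-\infty}^{\infty}F_a(t)\,\dd t$. So it suffices to show $\int_{-\infty}^{\infty}F_a(t)\,\dd t\ge\frac{2\pi}{e}$ (Theorem~\ref{thm:1-e}). The obstacle here is that $F_a$ oscillates badly along the real line --- each factor carries its own phase --- so the triangle inequality is useless; indeed $\int_{\RR}|F_a|$ can greatly exceed $\int_{\RR}F_a$.

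The main new idea is to move the contour. As $F_a$ has no real poles, I would deform $\RR$ to a contour $\gamma_a$, homotopic to $\RR$ through a pole-free region and with vanishing arcs at infinity (the decay $F_a(t)=O(|t|^{-2})$ makes this harmless), chosen so that \emph{every} factor $(1-ia_jt)^{-1}$, and therefore $F_a$ itself, takes a positive real value on $\gamma_a$; morally $\gamma_a$ follows the locus where $\sum_j\arg(1-ia_jt)=0$. By Cauchy's theorem $\int_{\gamma_a}F_a=\int_{\RR}F_a$, and now the integrand is nonnegative, so the task reduces to bounding $\int_{\gamma_a}F_a$ from below (Theorem~\ref{thm:1-e-2}). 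I would finish by comparing with the one-dimensional model: for $v=(1)\in\RR^1$ the variable $Y-1$ has density exactly $\frac1e$ at $0$, so, with $F_v(t)=\frac{e^{-it}}{1-it}$, one computes $\int F_v=\frac{2\pi}{e}$; the plan is to prove the pointwise inequality $F_a(x)\ge F_v(x)$ for $x$ on $\gamma_a$ (Theorem~\ref{thm:pointwise}) --- both sides being real there --- whence $\int_{\gamma_a}F_a\ge\int_{\gamma_a}F_v=\frac{2\pi}{e}$. The pointwise comparison I would in turn reduce to a log-derivative inequality $(\log F_a)'\ge(\log F_v)'$ along the contour (Equation~(\ref{eq:log-ineq})).

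The crux --- where I expect essentially all the work to lie --- is two-fold. First, one has to \emph{construct} $\gamma_a$: show that a pole-avoiding deformation of the real line exists on which $F_a$ is positive and real and along which the integral converges, controlling the geometry of the phase-cancellation locus uniformly over all admissible $a$. Second, the log-derivative inequality of Equation~(\ref{eq:log-ineq}) must be established uniformly in $a$ and in the parameter along $\gamma_a$; this is precisely where the structural constraints $\sum_j a_j=0$ and $\sum_j a_j^2=1$ of the simplex problem have to be used, and it is the home of the ``curious'' Cauchy-type inequality of Equations~(\ref{eq:cauchy-1})--(\ref{eq:cauchy-2}). Everything preceding the contour shift is routine bookkeeping; these two steps are the substance.
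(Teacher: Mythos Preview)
Your outline is the paper's approach, but two details are misstated in ways that would cause trouble if taken literally.

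First, it is not true that each factor $(1-ia_jt)^{-1}$ is positive real along $\gamma_a$; only the product is. Your parenthetical ``morally $\gamma_a$ follows the locus where $\sum_j\arg(1-ia_jt)=0$'' is the right description and contradicts the sentence preceding it. There is no contour making all factors simultaneously real.

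Second, and more important, the pointwise comparison of Theorem~\ref{thm:pointwise} is not between $F_a$ and $F_v$ on the same contour $\gamma_a$. In general $F_v$ is not real on $\gamma_a$, so ``both sides being real there'' fails, and $\int_{\gamma_a}F_v$ is not $2\pi/e$. What the paper does is parametrize each contour by its real part, set $\tilde F_u(x)\coloneqq F_u(x+iy_u(x))$, and prove $\tilde F_a(x)\ge\tilde F_v(x)$ for $x\in(-\pi,\pi)=E_v$; the chain is then $\int_{E_a}\tilde F_a\ge\int_{E_v}\tilde F_a\ge\int_{E_v}\tilde F_v=2\pi/e$, the first step using $E_v\subseteq E_a$ and positivity. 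This reparametrization, together with the auxiliary bound $|y_a(x)|\le y_v(x)$ (Lemma~\ref{lem:vuv-bound}), is what makes the log-derivative comparison go through. A minor related point: the Cauchy--Schwarz step (Equations~(\ref{eq:cauchy-1})--(\ref{eq:cauchy-2})) uses only $\sum_j u_j^2=1$, not $\sum_j u_j=0$, which is why the paper in fact proves the stronger Theorem~\ref{thm:minimizer-prob-strong} for arbitrary unit vectors.
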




\section{The technique of probability distributions}\label{sec:prob}

In Section 0 of \cite{webb} it is proven that if $Y_1, Y_2, \dots, Y_{n+1}$ are i.i.d. standard exponential random variables (mean $1$), and $G_{a}(x)$ represents the probability density function of the sum $a_1Y_1 + a_2Y_2 + \dots + a_{n+1}Y_{n+1}$, then the volume of $T(a)$ is given by \begin{equation}\label{eq:prob-interpret}\vol_{n-1}(T(a)) = \frac{\sqrt{n+1}}{(n-1)!}G_{a}(0).\end{equation} In other words, the volume of $T(a)$ is proportional to the density at $0$ of the random variable $a_1Y_1 + a_2Y_2 + \dots + a_{n+1}Y_{n+1}$. So Question \ref{q:main} and Theorem \ref{thm:minimizer} may be converted to the following statements about probability distributions:

\begin{question}\label{q:prob}
    What is the smallest possible value of $G_{a}(0)$, as $a$ varies over all unit vectors in $\RR^{n+1}$ which are perpendicular to $\vec{\mathbf{1}}$?
\end{question}
\begin{theorem}\label{thm:minimizer-prob}
    For each unit vector $a \in \RR^{n+1}$ satisfying $a \perp \vec{\mathbf{1}}$, we have $G_{a}(0) \ge \frac{1}{e}$.
\end{theorem}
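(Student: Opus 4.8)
Since $a_1+\dots+a_{n+1}=0$, the random variable $Z \coloneqq a_1 Y_1 + \dots + a_{n+1}Y_{n+1}$ is literally equal to $a_1(Y_1-1)+\dots+a_{n+1}(Y_{n+1}-1)$, so it is centered with $\Var(Z)=a_1^2+\dots+a_{n+1}^2=1$, and $G_a(0)$ is its density at $0$; this is the content of Theorem \ref{thm:prob-distro}. The characteristic function of a centered exponential $Y_j-1$ is $e^{-it}/(1-it)$, and because $\sum_j a_j=0$ the exponential prefactors multiply to $e^{-it(a_1+\dots+a_{n+1})}=1$, so the characteristic function of $Z$ is the rational (meromorphic) function $F_a(t)=\prod_{j=1}^{n+1}(1-ia_jt)^{-1}$. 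Its poles sit on the imaginary axis at the points $-i/a_j$ over the $a_j\neq 0$, hence stay a fixed distance from $\RR$; and since $\sum a_j=0$, $\sum a_j^2=1$ force at least two nonzero $a_j$, we have $|F_a(t)|=\prod_j(1+a_j^2t^2)^{-1/2}=O(|t|^{-2})$, so $F_a\in L^1(\RR)$ and Fourier inversion gives $G_a(0)=\frac{1}{2\pi}\int_{\RR}F_a(t)\,\dd t$. The goal thus becomes $\int_{\RR}F_a(t)\,\dd t\ge 2\pi/e$.

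\textbf{Moving the contour.} On the real axis the factors $(1-ia_jt)^{-1}$ spin in argument and $F_a$ oscillates, so there is no obvious way to lower-bound $\int_{\RR}F_a$ directly. The idea is instead to deform $\RR$ to a contour $\gamma_a\subset\CC$ --- a bounded perturbation of the real line that stays inside the pole-free strip around $\RR$ and merges with $\RR$ near $\pm\infty$ --- chosen so that $F_a$ is \emph{positive and real} everywhere on $\gamma_a$: concretely, for each real $x$ one would select a height $y=y_a(x)$ solving $\sum_{j}\arg\!\big(1-ia_j(x+iy)\big)=0$ and verify this traces out an admissible curve. Since no pole is crossed and the decay $|F_a|=O(|t|^{-2})$ kills the connecting vertical segments at $\pm\infty$, Cauchy's theorem yields $\int_{\RR}F_a=\int_{\gamma_a}F_a$, and the problem reduces to the honest inequality between positive numbers $\int_{\gamma_a}F_a\ge 2\pi/e$.

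\textbf{Comparison with the one-variable model.} The equality case $u=(1,0,\dots,0)$ --- where $Z=Y_1-1$ and the density at $0$ equals $e^{-(0+1)}=1/e$ --- suggests proving the sharper pointwise statement $F_a\ge F_v$ along the matched contours, where $v=(1)\in\RR^1$ and $F_v(t)=e^{-it}/(1-it)$ is the characteristic function of a single centered exponential; indeed $\frac{1}{2\pi}\int_{\gamma_v}F_v=\frac{1}{2\pi}\int_{\RR}F_v=1/e$, so $F_a\ge F_v$ on the contour finishes everything. To get the pointwise bound I would differentiate along the contour and show the logarithmic derivative of $F_a$ dominates that of $F_v$ (normalized at a convenient base point), which after expanding the product structure of $F_a$ should collapse to an inequality of Cauchy--Schwarz type among the $a_j$ --- the ``curious inequality'' flagged in the outline --- fed by $\sum a_j=0$ and $\sum a_j^2=1$.

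\textbf{Where the difficulty lies.} I expect the real work --- and the part that does not generalize beyond the simplex --- to be exactly the last two steps: first, verifying that the defining relation for $y_a(x)$ produces a genuine admissible contour on which $F_a>0$, uniformly in $a$ (its behavior at $x=0$ and at the ends, and the fact that no pole is swept during the deformation); and second, establishing the Cauchy--Schwarz-type inequality that drives $F_a\ge F_v$ pointwise. Everything upstream --- the passage to $Z$, the characteristic-function computation, Fourier inversion, and the evaluation $\int_{\RR}F_v=2\pi/e$ --- is routine.
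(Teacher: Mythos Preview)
Your outline is precisely the paper's argument: Fourier inversion, deformation to a contour $\gamma_a=\{x+iy_a(x)\}$ on which $F_a>0$, and then the pointwise comparison $\tilde F_a(x)\ge\tilde F_v(x)$ on $(-\pi,\pi)$, proved by comparing logarithmic derivatives and invoking Cauchy--Schwarz (Equations~(\ref{eq:cauchy-1})--(\ref{eq:cauchy-2})) together with an auxiliary bound $|y_a|\le y_v$ (Lemma~\ref{lem:vuv-bound}) that you did not anticipate but which is needed to close the last inequality.

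Two technical points where your sketch drifts from what actually works. First, $\gamma_a$ is \emph{not} in general a bounded perturbation of $\RR$ that stays inside a pole-free strip and merges with $\RR$ at $\pm\infty$; the paper (Case~III of Theorem~\ref{thm:change-integral}, the relevant case since $\sum a_j=0$) does not claim $y_a$ is bounded, and instead justifies the contour shift by bounding the integrals over vertical connecting segments directly, using $|F_a(x+iy)|\le C/x^2$ uniformly in $y$ together with an extra decay in $y$ once $|y|>\max_j|1/a_j|$. Second, $F_v\notin L^1(\RR)$ (it decays only like $|t|^{-1}$), so one cannot literally write $\frac{1}{2\pi}\int_\RR F_v=1/e$ by Fourier inversion; the paper sidesteps this by evaluating $\frac{1}{2\pi}\int_{\gamma_v}\tilde F_v=1/e$ via a residue computation at the simple pole.
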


Throughout this paper, let $u \in \RR^{n+1}$ denote an arbitrary unit vector (not necessarily satisfying $u \perp \vec{\mathbf{1}}$). Let $G_u(x)$ denote the probability density of the random variable $u_1(Y_1-1)+u_2(Y_2-1)+\dots+u_{n+1}(Y_{n+1}-1)$ (sum of independent \textbf{centered} exponential random variables), and observe that when $u \perp \vec{\mathbf{1}}$ this agrees with the earlier definition of $G_a(x)$. We will prove the following strengthening of Theorem \ref{thm:minimizer-prob}, which may be of independent interest:
\begin{theorem}\label{thm:minimizer-prob-strong}
    For each unit vector $u \in \RR^{n+1}$, not necessarily satisfying $u \perp \vec{\mathbf{1}}$, we have $G_u(0) \ge \frac{1}{e}$. Equality occurs if $u$ is of form $(1,0,0,0,\dots,0)$.
\end{theorem}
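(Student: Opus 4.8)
The plan is to follow the route outlined in the introduction: replace the density by the characteristic function, push the contour of integration onto a curve where the integrand is real and positive, and compare the result pointwise with the one-dimensional model $v=(1)\in\RR^1$.

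\emph{Reduction and Fourier inversion.} Dropping the zero coordinates of $u$ only lowers the ambient dimension, so assume every $u_j\neq0$. If just one coordinate survives then $Z=\pm(Y_1-1)$ and $G_u(0)=e^{-1}$ exactly — this is both the asserted bound and the equality case — so assume at least two nonzero coordinates, which makes $F_u$ integrable on $\RR$; and since negating $u$ leaves the law of $Z$ unchanged at $0$, assume also $\sum_j u_j\ge0$. The characteristic function of $Z$ is the meromorphic function
\[F_u(t)=\EE\!\left[e^{itZ}\right]=\prod_{j}\frac{e^{-iu_jt}}{1-iu_jt},\]
whose only singularities are the simple poles $t=-i/u_j$, all on the imaginary axis. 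Fourier inversion at the continuity point $0$ gives $G_u(0)=\tfrac1{2\pi}\int_{\RR}F_u(t)\,\dd t$, so it is enough to show $\int_{\RR}F_u\ge\tfrac{2\pi}{e}$.

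\emph{Moving the contour.} From $F_u'/F_u=-t\sum_j u_j^2/(1-iu_jt)$ and $\sum_j u_j^2=1$ one gets $\log F_u(t)=-\tfrac12 t^2+O(t^3)$ near $0$; thus $0$ is a nondegenerate critical point of $\log F_u$, and near $0$ the set $\{t:F_u(t)\in\RR_{>0}\}$ is the union of two analytic arcs crossing transversally. Let $\gamma_u$ be the arc tangent to the real axis, parametrized as $t=x+i\psi_u(x)$ with $\psi_u(x)=O(x^2)$. I would establish: (a) $\psi_u\le0$ on its maximal interval $I_u$, with $\psi_u(x)\to-\infty$ and $F_u\to0$ as $x\to\partial I_u$; (b) $I_u\supseteq(-\pi,\pi)$; (c) from $\overline{F_u(t)}=F_u(-\bar t)$ the arc $\gamma_u$ is symmetric across the imaginary axis, so $\psi_u$ is even and $\phi_u(x):=F_u(x+i\psi_u(x))$ is positive, real, and even; (d) using $\sum_j u_j\ge0$, $|F_u|$ decays at infinity throughout the closed lower half-plane, and since the poles of $F_u$ sit on the imaginary axis, which $\gamma_u$ meets only at $0$, the deformation of $\RR$ onto $\gamma_u$ crosses no pole, whence $\int_{\RR}F_u\,\dd t=\int_{\gamma_u}F_u\,\dd t$. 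Parametrizing by $x$, and using that $\phi_u$ is even while $\psi_u'$ is odd,
\[2\pi G_u(0)=\int_{\gamma_u}F_u\,\dd t=\int_{I_u}\phi_u(x)\bigl(1+i\psi_u'(x)\bigr)\,\dd x=\int_{I_u}\phi_u(x)\,\dd x\;\ge\;\int_{-\pi}^{\pi}\phi_u(x)\,\dd x,\]
the last step because $\phi_u\ge0$ and $I_u\supseteq(-\pi,\pi)$.

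\emph{Pointwise comparison and conclusion.} For $v=(1)$ one computes $\psi_v(x)=x\cot x-1$ and $\phi_v(x)=\tfrac{\sin x}{x}\,e^{x\cot x-1}$ on $(-\pi,\pi)$. Since $\phi_u(0)=\phi_v(0)=1$ and both are even, it suffices to prove $(\log\phi_u)'(x)\ge(\log\phi_v)'(x)$ for $x\in(0,\pi)$; expanding $(\log\phi_u)'$ through $F_u'/F_u=-t\sum_j u_j^2/(1-iu_jt)$ and the chain rule along $\gamma_u$, this becomes an explicit inequality in $x$ and $\psi_u(x)$ of Cauchy–Schwarz type in the coordinates of $u$ (near $x=0$ it reduces to $\big(\sum_j u_j^3\big)^2\le\big(\sum_j u_j^2\big)\big(\sum_j u_j^4\big)$). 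Granting it, $\phi_u\ge\phi_v$ on $(-\pi,\pi)$; and evaluating the model integral by the residue theorem — closing in the lower half-plane and collecting the pole at $t=-i$ —
\[\int_{-\pi}^{\pi}\phi_v(x)\,\dd x=\int_{\RR}\frac{e^{-it}}{1-it}\,\dd t=-2\pi i\,\Res_{t=-i}\frac{e^{-it}}{1-it}=\frac{2\pi}{e},\]
so $2\pi G_u(0)\ge2\pi/e$, and equality at $u=(1,0,\dots,0)$ is immediate since then $Z=Y_1-1$ and $G_u(0)$ is the density of $Y_1$ at $1$, namely $e^{-1}$. The step I expect to be hardest is the pointwise inequality $(\log\phi_u)'\ge(\log\phi_v)'$ — it is where the exponential law of the $Y_j$'s and the precise shape of $\gamma_u$ enter, and it is delicate because $\phi_u$ and $\phi_v$ already agree to second order at $0$, so the inequality lives in the higher-order terms. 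After that, the structural facts (a)–(d) about $\gamma_u$ — above all (b), that the parameter range reaches $\pm\pi$, and (d), that the homotopy from $\RR$ to $\gamma_u$ really avoids all the poles — are the places where care is needed.
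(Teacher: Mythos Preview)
Your approach is the paper's approach, but two of the bullets you propose to ``establish'' do not hold as stated or do not suffice. First, claim~(a), that $\psi_u\le0$ on $I_u$, is false in general: expanding $\arg F_u(x)$ along the real axis gives $-\tfrac13 x^3\sum_j u_j^3+O(x^5)$, so the sign of $\psi_u$ near $0$ is opposite to that of $\sum_j u_j^3$, and one can easily arrange $\sum_j u_j\ge0$ while $\sum_j u_j^3<0$ (take several small equal positive coordinates and one large negative one). Hence $\gamma_u$ need not stay in a half-plane, and your justification of~(d) via lower-half-plane decay collapses. The paper does not claim a sign on the height function; it instead notes that all poles of $F_u$ lie on the imaginary axis while $\gamma_u$, being the graph of a function with $\psi_u(0)=0$, meets the imaginary axis only at $0$, so the region swept between $\RR$ and $\gamma_u$ contains no pole regardless of the sign of $\psi_u$, and the fringe contributions are killed by an explicit modulus bound on $F_u$.

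Second, and more substantively, the pointwise inequality $(\log\phi_u)'\ge(\log\phi_v)'$ is not a single Cauchy--Schwarz step in $x$ and $\psi_u(x)$. Cauchy--Schwarz against the weight vector $(u_j)$ (using $\sum_j u_j^2=1$) delivers only
\[
(\log\phi_u)'(x)\ \ge\ -\frac{x^2+\psi_u(x)^2}{x},\qquad\text{whereas}\qquad (\log\phi_v)'(x)=-\frac{x^2+\psi_v(x)^2}{x},
\]
so one still needs the comparison $|\psi_u(x)|\le|\psi_v(x)|$ on $(0,\pi)$. This is a separate lemma (the paper's Lemma~\ref{lem:vuv-bound}), proved by a monotonicity argument for $\Phi_v$ along $\gamma_u$ that uses the differential inequality coming from $|u_j|\le1$; you have not isolated it, and without it the argument does not close. (A minor third point: $\int_{\RR}F_v$ diverges since $v$ has a single nonzero entry, so the model value $2\pi/e$ should be computed directly on $\gamma_v$ via the residue at the unique pole, as the paper does, rather than routed through the real line.)
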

Note that this theorem is precisely the same as Theorem \ref{thm:prob-distro}.

\textbf{Note.} \textit{It is true that equality holds in Theorem \ref{thm:minimizer-prob-strong} if and only if $u$ is of form $(1,0,0,0,\dots,0)$, though this will not be proven. Since $(1,0,0,0,\dots,0)$ is not perpendicular to $\vec{\mathbf{1}}$, there is no vector $a$ which achieves equality in Theorem \ref{thm:minimizer-prob}, and hence no central section $T(a)$ achieves equality in Theorem \ref{thm:minimizer}. In other words, Theorem \ref{thm:minimizer} is not sharp, even though Theorem \ref{thm:minimizer-prob-strong} is. We lost some sharpness when we considered the space of all unit vectors $u$ (instead of the space of unit vectors $a$ satisfying $a \perp \vec{\mathbf{1}}$). This is why we cannot quite prove that $a_\mathrm{facet}$ achieves the minimum central section. We can only prove Theorem \ref{thm:main}, which says that $a_\mathrm{facet}$ is within a $1-o(1)$ factor of the minimum central section.}

Since for $u=(1,0,0,0,\dots,0)$ we have that $G_u(x)$ is the probability density of the random variable $Y_1-1$ with $G_u(0)=\frac{1}{e}$, we henceforth restrict our attention to proving Theorem \ref{thm:minimizer-prob-strong} when $u$ is not of this form; i.e. when $u$ has at least two nonzero entries.


Let us try to understand the probability distribution $G_u$. For each $1 \le j \le n+1$, define $f_j(x)$ to be the probability density function of the random variable $u_j(Y_j-1)$. Note that the $u_j(Y_j-1)$ are mutually independent. Since $G_{u}$ is defined to be the probability density function of the sum of these independent random variables, we know that $G_{u}$ is the convolution of the $f_j$; i.e. $G_{u} = f_1 * f_2 * \dots * f_{n+1}$. 


Define the function $f:\RR \af [0,+\infty)$ via \[f(x) = \begin{cases}e^{-x} &\text{ if } x \ge 0 \\ 0 &\text{ otherwise}. \end{cases}\] Note that $f$ is the probability density function of a standard exponential random variable (mean $1$), so each $Y_j$ has probability density function given by $f$. It follows that $f_j(x) = \frac{1}{|u_j|}f(\frac{x}{u_j}+1)$ for each $j$, and hence we may write 
\begin{equation*}\begin{split}G_{u}(x) &= (f_1 * f_2 * \dots * f_{n+1})(x) \\
&= \int_{x_1+x_2+\dots+x_{n+1}=x} f_1(x_1)f_2(x_2)\dots f_{n+1}(x_{n+1}) \, \dd x_1\, \dd x_2\,\cdots\,\dd x_{n+1} \\
&= \int_{x_1+x_2+\dots+x_{n+1}=x} \prod_{j=1}^{n+1} \frac{1}{|u_j|}f\left(\frac{x_j}{u_j}+1\right) \, \dd x_1\,\dd x_2\,\cdots\,\dd x_{n+1}.\end{split}\end{equation*}


\section{The technique of the Fourier inversion formula}\label{sec:fourier}

We will now use the Fourier transform to convert the convolution into a pointwise product. Defining the Fourier transform (Section 2 of \cite{webb}) \[\widehat{G_{u}}(t) = \int_{-\infty}^{+\infty} G_{u}(x) e^{-itx} \, \dd x\] we have \[\widehat{G_{u}}(t) = \prod_{j=1}^{n+1} \widehat{f_j}(t).\] From $\hat{f}(t) = \frac{1}{1+it}$, we calculate $\widehat{f_j}(t) = \frac{e^{iu_jt}}{1+iu_jt}$ and thus \[\widehat{G_u}(t) = \prod_{j=1}^{n+1} \frac{e^{iu_jt}}{1+iu_jt}.\] Note that whenever $u$ has at least two nonzero entries, we have $\widehat{G_u} \in L^1$ and hence the Fourier inversion formula is valid, yielding \[G_{u}(x) = \frac{1}{2\pi} \int_{-\infty}^{+\infty} \widehat{G_u}(t) e^{ixt} \, \dd t = \frac{1}{2\pi} \int_{-\infty}^{+\infty} \left(\prod_{j=1}^{n+1} \frac{e^{iu_jt}}{1+iu_jt} \right) e^{ixt} \, \dd t.\] In particular, plugging in $x=0$ yields \[G_u(0) = \frac{1}{2\pi} \int_{-\infty}^{+\infty} \left(\prod_{j=1}^{n+1} \frac{e^{iu_jt}}{1+iu_jt} \right) \, \dd t.\] If we want to show a lower bound on $G_{u}(0)$ (as in Theorem \ref{thm:minimizer-prob-strong}), it suffices to prove the following inequality: \[\frac{1}{2\pi} \int_{-\infty}^{+\infty} \left(\prod_{j=1}^{n+1} \frac{e^{iu_jt}}{1+iu_jt} \right)  \, \dd t \ge \frac{1}{e}.\] Writing \begin{equation}\label{eq:F-def}F_u(t) \coloneqq \prod_{j=1}^{n+1} \frac{e^{iu_jt}}{1+iu_jt},\end{equation} the inequality that we wish to show becomes the following: \begin{theorem}\label{thm:1-e}
    For each unit vector $u \in \RR^{n+1}$ such that $u$ has at least two nonzero entries, we have \[\frac{1}{2\pi} \int_{-\infty}^{+\infty} F_u(t) \, \dd t \ge \frac{1}{e}.\]
\end{theorem}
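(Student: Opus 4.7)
The plan follows the roadmap outlined in the introduction. Since $F_u$ is highly oscillatory on the real axis, the first step is to deform the contour $\RR$ to a new contour $\gamma_u \subset \CC$ along which $F_u$ takes nonnegative real values. The function $F_u$ is meromorphic with simple poles at $t = i/u_j$ for each $j$ with $u_j \ne 0$, and it decays exponentially in one vertical direction thanks to the factors $e^{iu_j t}$, so Cauchy's theorem permits such a deformation as long as no poles are crossed and the tails match. I would define $\gamma_u$ as a branch of the level set $\{t \in \CC : \Imag \log F_u(t) = 0\}$ passing through the origin and going off to $\pm\infty$ horizontally; justifying existence, regularity, and the correct tail behaviour of $\gamma_u$ (e.g. by a real implicit-function argument together with asymptotic analysis of $\log F_u$ away from the poles) is a nontrivial but largely routine analytic task.

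Second, with $\gamma_u$ in hand, I would compare $F_u$ pointwise on $\gamma_u$ with the function $F_v(t) = e^{it}/(1+it)$ corresponding to $v = (1) \in \RR^1$, aiming to show $F_u(t) \ge F_v(t) \ge 0$ for $t \in \gamma_u$. Since $\log F_u(0) = \log F_v(0) = 0$, this reduces to a comparison of directional derivatives of the (real) logarithms along $\gamma_u$. Unwinding $\frac{\dd}{\dd t}\log F_u(t) = -\sum_j \frac{u_j^2 t}{1+iu_j t}$ and the analogous formula for $F_v$, one is led to a scalar inequality involving $\sum_j \frac{u_j^2}{1+u_j^2 t^2}$ versus $\frac{1}{1+t^2}$, together with the constraint $\sum_j u_j^2 = 1$. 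I anticipate that this pointwise bound is the main obstacle: it is precisely where the ``curious inequality'' (Equations (\ref{eq:cauchy-1}) and (\ref{eq:cauchy-2})) enters, and the difficulty is that a naive Jensen or Cauchy--Schwarz bound handles only the real parts, whereas on a tilted contour $\gamma_u$ one must also control the imaginary-part contributions; some algebraic manipulation specific to the factored form of $F_u$ appears essential.

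Finally, chaining the pointwise inequality with contour deformation yields
\[\frac{1}{2\pi}\int_{-\infty}^{+\infty} F_u(t)\,\dd t \;=\; \frac{1}{2\pi}\int_{\gamma_u} F_u(t)\,\dd t \;\ge\; \frac{1}{2\pi}\int_{\gamma_u} F_v(t)\,\dd t \;=\; \frac{1}{2\pi}\int_{-\infty}^{+\infty} F_v(t)\,\dd t,\]
where the last equality again uses Cauchy's theorem, deforming $\gamma_u$ back to $\RR$ without crossing the unique pole of $F_v$ at $t = i$ (choosing $\gamma_u$ to remain in the lower half-plane, for instance, ensures this). The remaining integral is evaluated by the residue theorem: closing in the upper half-plane captures the residue $e^{-1}/i$ at $t = i$, giving $\int F_v = 2\pi i \cdot e^{-1}/i = 2\pi/e$, whence the desired $1/e$ lower bound.
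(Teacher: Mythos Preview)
Your broad outline matches the paper's strategy, but the pointwise comparison step contains a genuine gap. You propose to show $F_u(t) \ge F_v(t) \ge 0$ for $t \in \gamma_u$ and then chain $\int_{\gamma_u} F_u \ge \int_{\gamma_u} F_v$. This does not make sense: $\gamma_u$ is the locus where $F_u$ is positive real, but $F_v$ has no reason to be real there (its own positivity locus is $\gamma_v$, which is different from $\gamma_u$ unless $u=v$). So neither the pointwise inequality nor the integral inequality is well-posed as written. Relatedly, your suggestion of ``choosing $\gamma_u$ to remain in the lower half-plane'' is not available: $\gamma_u$ is determined by $u$, and in fact $y_u(x)$ is typically positive on part of $(-\pi,\pi)$ and can even blow up to $+\infty$ at the edge of its domain.

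What the paper actually compares are the \emph{parameterized} values $\tilde F_u(x)\coloneqq F_u(x+iy_u(x))$ and $\tilde F_v(x)\coloneqq F_v(x+iy_v(x))$, each evaluated on its \emph{own} contour but at the same real abscissa $x$. After observing (via even/odd symmetry) that the contour integral equals $\int_{E_u}\tilde F_u(x)\,\dd x$, the comparison becomes the real-variable inequality $\tilde F_u(x)\ge \tilde F_v(x)$ for $x\in(-\pi,\pi)=E_v\subseteq E_u$. Proving this requires an auxiliary lemma you do not anticipate, namely $|y_u(x)|\le y_v(x)$ on $(-\pi,\pi)$ (Lemma~\ref{lem:vuv-bound}), obtained by showing $\Phi_v$ is monotone along $\gamma_u$. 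Only then does the Cauchy--Schwarz step (your ``curious inequality'') combine with $|y_u|\le y_v$ to give $\frac{\dd}{\dd x}\log\tilde F_u \ge \frac{\dd}{\dd x}\log\tilde F_v$. Finally, the value $\frac{1}{e}$ is obtained not from $\int_\RR F_v$ (which is only conditionally convergent and needs Jordan's lemma) but directly from $\int_{-\pi}^{\pi}\tilde F_v(x)\,\dd x$, closing $\gamma_v$ with a high horizontal segment around the pole at $t=i$.
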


\section{Main idea: Changing the contour of integration}\label{sec:change-contour}



\begin{figure}
    \centering
    \includegraphics{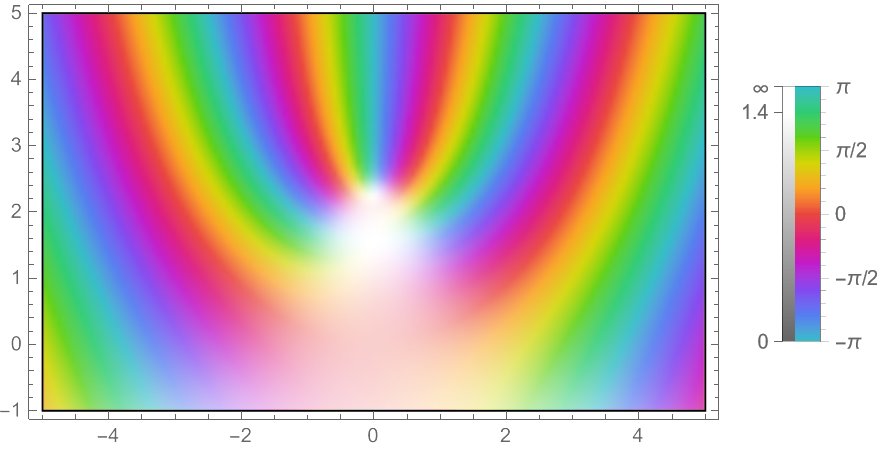}
    \caption{A plot of $F_u(t)$ on the Argand plane when $u = (\sqrt{0.42},\sqrt{0.38},\sqrt{0.20})$. The horizontal axis denotes the real part of $t$, and the vertical axis denotes the imaginary part of $t$. The brightness represents the modulus of $F_u(t)$, while the hue represents the argument of $F_u(t)$ (see the legends to the right of the plot). Note the poles along the imaginary axis. The contour $\gamma_u$ will be chosen such that $F_u$ is red along $\gamma_u$. \\ This image was produced using Wolfram Mathematica Student Edition.}
    \label{fig:F-u}
\end{figure}


In this section we will showcase our main new idea, which is to change the contour of integration.

As noted in Section \ref{sec:real-intro}, it is difficult to bound the integral $\int_{-\infty}^{+\infty} F_u(t) \, \dd t$ as written, because $F_u$ oscillates on the real axis (see Figure \ref{fig:F-u}). We will instead find a contour $\gamma_{u}$ in the complex plane (the shape of the contour will depend on the choice of $u$), along which $F_{u}$ takes positive real values (Proposition \ref{prop:real-values}); finally we will argue that the integral $\int_{-\infty}^{+\infty} F_u(t) \, \dd t$ equals the contour integral $\int_{\gamma_u} F_u(t) \, \dd t$ (see Equation (\ref{eq:move-contour})), allowing us to change the integral in Theorem \ref{thm:1-e} to an easier integral. 

The rest of this section will be dedicated to finding and defining $\gamma_u$.

As in Section \ref{sec:prob}, let $u \in \RR^{n+1}$ be a fixed unit vector. We want to find a function $y_u:\RR\af\RR$, depending on $u$, such that the graph of $y_u$ (i.e. the set $\left\{x+iy_u(x) \bigm\vert x \in \RR \right\}$) is exactly the contour $\gamma_u$ that we seek. In order for this to be the case, we need \[\arg F_u(x+iy_u(x)) \equiv 0 \pmod{2\pi}\] for each $x$, where $\arg z$ denotes the argument of a nonzero complex number $z$. Using the definition of $F_u(t)$ (Equation (\ref{eq:F-def})), we can write \begin{equation}\label{eq:arg-computation}\begin{split}\arg F_u(x+iy) &= \arg \prod_{j=1}^{n+1} \frac{e^{iu_j(x+iy)}}{1+iu_j(x+iy)} \\ 
&= x\left(\sum_{j=1}^{n+1} u_j\right) - \sum_{j=1}^{n+1} \arg (1+iu_j(x+iy))\\
&= x\left(\sum_{j=1}^{n+1} u_j\right) - \sum_{j=1}^{n+1} \arg (1-u_jy+iu_jx)\\
&\equiv x\left(\sum_{j=1}^{n+1} u_j\right) - \sum_{j=1}^{n+1} \arctan \frac{u_jx}{1-u_jy} \pmod{2\pi}
\end{split}\end{equation} so all we need to do is to choose a continuous function $y_u$ such that \begin{equation}\label{eq:angle-equation} x\left(\sum_{j=1}^{n+1} u_j\right) - \sum_{j=1}^{n+1} \arctan \frac{u_jx}{1-u_jy_u(x)} \equiv 0 \pmod{2\pi}\end{equation} holds for all $x$. For any fixed $x$, there may be multiple values of $y_u(x)$ which satisfy Equation (\ref{eq:angle-equation}) (due to the fact that that equation is modulo $2\pi$), but if we additionally stipulate that $y_u$ is continuous and satisfies $y_u(0) = 0$, then there is only one way to choose $y_u$. We will describe how to choose $y_u$ below.

Let $x,y$ be arbitrary real numbers satisfying $x > 0$. We will adopt the convention that the $\arccot$ function is a continuous function from $\RR$ to $\RR$ that has range $(0,\pi)$. (Some authors define $\arccot \theta$ to have a jump discontinuity at $\theta = 0$; we will not use their convention here.) Assume that all $u_j$ are nonzero (since we can just discard any zero entries). 


For each $1 \le j \le n+1$, we define $\alpha_j = \alpha_j(x,y;u) \coloneqq \arccot \frac{\frac{1}{u_j}-y}{x}$ if $u_j > 0$, and we define $\beta_j = \beta_j(x,y;u) \coloneqq \arccot \frac{-(\frac{1}{u_j}-y)}{x}$ if $u_j < 0$. We will not define $\alpha_j$ for $u_j < 0$, nor will we define $\beta_j$ for $u_j > 0$. (We may suppress the dependence on $x,y,u$, thus writing merely $\alpha_j$ for what should technically be written as $\alpha_j(x,y;u)$, if it is clear from context.) We define the function \begin{equation}\label{eq:Phi-u-1} \Phi_u(x,y) \coloneqq x\sum_{j=1}^{n+1}u_j - \sum_{j:u_j>0}\alpha_j + \sum_{j:u_j<0} \beta_j, \end{equation} and the computation in Equation (\ref{eq:arg-computation}) implies \begin{equation}\label{eq:Phi-u-2}\Phi_u(x,y) \equiv \arg F_u(x+iy) \pmod{2\pi}\end{equation} for all $(x,y)$ satisfying $x>0$. 

The advantage of defining $\Phi_u$ is that $\Phi_u$ takes values in $\RR$, whereas $\arg F_u(x+iy)$ is only defined modulo $2\pi$. So we can say that $\Phi_u(x,y)$ is a real-valued function that is a \emph{lifting} of the function $\arg F_u(x+iy)$. In other words, $\Phi_u(x,y)$ is a real-valued function that always is equivalent to the argument of $F_u(x+iy)$ modulo $2\pi$. 

We can now say that the pair $(x,y)$ \textbf{has good angle} if $\Phi_u(x,y) = 0$, this equality holding in $\RR$ (instead of just $\RR/2\pi\ZZ$).

\begin{figure}
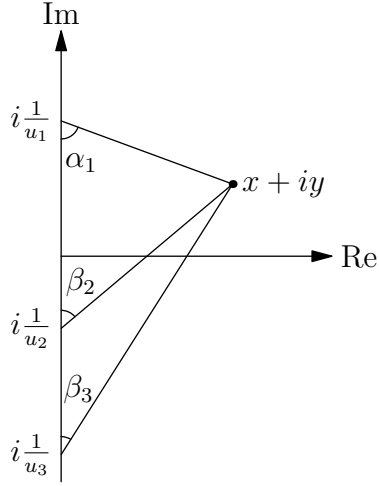

    \centering
\begin{asy}
unitsize(1.2cm);
draw((0,0)--(3,0),arrow=Arrow());label("Re",(3.3,0));
draw((0,-2.5)--(0,2.5),arrow=Arrow());label("Im",(0,2.7));
pair P = (1.9,0.8);
dot(P);
label("$x+iy$",P,E);
real u1 = 1.5;
real u2 = -0.8;
real u3 = -2.2;
draw(P--(0,u1)); label("$i\frac{1}{u_1}$",(0,u1),W);
draw(P--(0,u2)); label("$i\frac{1}{u_2}$",(0,u2),W);
draw(P--(0,u3)); label("$i\frac{1}{u_3}$",(0,u3),W);
real rad = 0.2;
draw(arc((0,u1),(0,u1-rad),point(circle((0,u1),rad),intersect(circle((0,u1),rad),P--(0,u1))[0]))); label("$\alpha_1$",(0,u1),2*SE+2*S);
draw(arc((0,u2),intersectionpoint(circle((0,u2),rad),P--(0,u2)),(0,u2+rad))); label("$\beta_2$",(0,u2),2*NE+2*N);
draw(arc((0,u3),intersectionpoint(circle((0,u3),rad),P--(0,u3)),(0,u3+rad))); label("$\beta_3$",(0,u3),2*NE+4*N);
\end{asy}
    \caption{For each $j$, we define $\alpha_j \coloneqq \arccot \frac{\frac{1}{u_j}-y}{x}$ if $u_j > 0$, and we define $\beta_j \coloneqq \arccot \frac{-(\frac{1}{u_j}-y)}{x}$ if $u_j < 0$.}
\end{figure}

Let $m_+, m_-$ denote the number of indices $j$ such that $u_j$ is positive (respectively, negative). So $m_+ + m_- = n+1$. Since $\Phi_u$ is a strictly decreasing function of $y$, we know that: \begin{enumerate}
    \item If $x$ satisfies $-m_-\pi < x\sum_{j=1}^{n+1}u_j < m_+ \pi$, then there exists a unique $y$ such that $(x,y)$ has good angle. In this case we will set $y_u(x) = y$.
    \item If $x$ does not satisfy $-m_-\pi < x\sum_{j=1}^{n+1}u_j < m_+ \pi$, then there does not exist $y$ such that $(x,y)$ has good angle.
\end{enumerate}

Thus we have found a function $y_u$ defined on the domain \[D_u \coloneqq \left\{x \biggm\vert x>0 \text{ and } {-m_-\pi} < x\sum_{j=1}^{n+1}u_j < m_+ \pi\right\}.\] (Observe that $D_u$ contains the interval $(0,\pi)$.) By definition, \begin{equation}\label{eq:Phi-x-y}\Phi_u(x,y_u(x)) = 0\end{equation} for all $x \in D_u$. In other words, $y_u$ is contained in the \textbf{zero locus} of $\Phi_u$.

We can obtain that $y_u$ is $\mathcal{C}^\infty$ (in fact, real analytic) on $D_u$ using the Implicit Function Theorem ($\Phi_u$ is a real analytic function of $(x,y)$ which is also strictly decreasing with respect to $y$). 

We may also show that $\lim_{x \searrow 0} y_u(x) = 0$. To do this, let $\epsilon \in (0,1)$ be arbitrary and fixed. If $j$ is such that $u_j>0$, we note that $\frac{1}{u_j}-\epsilon > \frac{1}{1}-1 = 0$, so $(\arccot \frac{\frac{1}{u_j}-\epsilon}{x})/(\frac{x}{\frac{1}{u_j}-\epsilon})$ goes to $1$ as $x \searrow 0$. Similarly, if $j$ is such that $u_j < 0$, we have $(\arccot \frac{-(\frac{1}{u_j}-\epsilon)}{x})/(\frac{x}{-(\frac{1}{u_j}-\epsilon)})$ goes to $1$ as $x \searrow 0$. So \[\frac{1}{x}\left( \sum_{j : u_j > 0} \arccot \frac{\frac{1}{u_j}-\epsilon}{x} - \sum_{j : u_j < 0} \arccot \frac{-(\frac{1}{u_j}-\epsilon)}{x} \right) \af \sum_{j : u_j > 0} \frac{1}{\frac{1}{u_j}-\epsilon} - \sum_{j : u_j < 0} \frac{1}{-\left(\frac{1}{u_j}-\epsilon\right)} = \sum_{j=1}^{n+1} \frac{1}{\frac{1}{u_j}-\epsilon}\] as $x \searrow 0$. In other words, \[\lim_{x \searrow 0} \frac{1}{x}\left( \sum_{j:u_j>0}\alpha_j(x,\epsilon;u) - \sum_{j:u_j<0}\beta_j(x,\epsilon;u) \right) = \sum_{j=1}^{n+1} \frac{1}{\frac{1}{u_j}-\epsilon}.\] Since $\sum_{j=1}^{n+1} \frac{1}{\frac{1}{u_j}-\epsilon} > \sum_{j=1}^{n+1} u_j$, we see that when $x$ is sufficiently close to $0$, we have $\Phi_u(x,\epsilon) < 0$. Using the fact that $\Phi_u$ is strictly decreasing with respect to $y$, we obtain $y_u(x) < \epsilon$ when $x$ is sufficiently close to $0$. Similarly, we can show that $y_u(x) > -\epsilon$ when $x$ is sufficiently close to $0$. Since $\epsilon$ was arbitrary, this proves $\lim_{x \searrow 0} y_u(x) = 0$, as desired.

Thus $y_u$ extends continuously to the point $x=0$, by setting $y_u(0) \coloneqq 0$. 

Let us extend $y_u$ by reflection about the $y$-axis. Explicitly, we define an expanded domain \[E_u \coloneqq \left\{x \bigm\vert x=0 \text{ or } x \in D_u \text{ or } {-x} \in D_u\right\}\] and extend $y_u$ to this expanded domain by setting $y_u(x) = y_u(-x)$ whenever $-x \in D_u$. So $y_u$ will be a continuous function on this expanded domain, and is in fact $\mathcal{C}^\infty$ away from the point $x=0$.

We will now show that $y_u$ is $\mathcal{C}^\infty$ on the entirety of $E_u$. Observe that for $x>0$ and $y \in (-1,1)$, we have \begin{equation*}\begin{split}\frac{1}{x}\Phi_u(x,y) &= \sum_{j=1}^{n+1}u_j - \sum_{j:u_j>0}\frac{1}{x}\arccot \frac{\frac{1}{u_j}-y}{x} + \sum_{j:u_j<0}\frac{1}{x}\arccot \frac{-(\frac{1}{u_j}-y)}{x}\\
&= \sum_{j=1}^{n+1}u_j - \sum_{j:u_j>0}\frac{1}{x}\arctan \frac{x}{\frac{1}{u_j}-y} + \sum_{j:u_j<0}\frac{1}{x}\arctan \frac{x}{-(\frac{1}{u_j}-y)}
\end{split}\end{equation*} since $\arccot \theta = \arctan (1/\theta)$ whenever $\theta>0$. Now we let $\phi$ denote the real analytic function given by $\phi(\theta) \coloneqq (\arctan\theta) / \theta$, so we have \begin{equation*}
    \begin{split}
        \frac{1}{x}\Phi_u(x,y) &= \sum_{j=1}^{n+1}u_j - \sum_{j:u_j>0}\frac{1}{\frac{1}{u_j}-y}\phi\left( \frac{x}{\frac{1}{u_j}-y}\right) - \sum_{j:u_j<0}\frac{1}{\frac{1}{u_j}-y}\phi\left( \frac{x}{-(\frac{1}{u_j}-y)}\right)\\
    \end{split}
\end{equation*} and since $\phi$ is an even function, we can write \begin{equation*}
    \begin{split}
        \frac{1}{x}\Phi_u(x,y) &= \sum_{j=1}^{n+1}u_j - \sum_{j=1}^{n+1}\frac{1}{\frac{1}{u_j}-y}\phi\left( \frac{x}{\frac{1}{u_j}-y}\right). \\
    \end{split}
\end{equation*} The right-hand-side can be extended to a real analytic function $\psi_u(x,y)$ on all of $\RR \times (-1,1)$. Moreover, we compute \[\left(\frac{\partial}{\partial y}\psi_u \right)(0,0) = -\sum_{j=1}^{n+1} \frac{1}{\left(\frac{1}{u_j}\right)^2} -\sum_{j=1}^{n+1}\frac{1}{\left(\frac{1}{u_j}\right)}\cdot 0 = -\sum_{j=1}^{n+1}u_j^2 = -1 \not= 0,\] so by the Implicit Function Theorem we know that near $(0,0)$, the zero locus of $\psi_u$ is the graph of a real analytic function. But this zero locus is exactly the graph of $y_u$, so we obtain that $y_u$ is a real analytic function of $x$ at the point $x=0$, as desired.

\begin{figure}
    \centering
\includegraphics[scale=0.7]{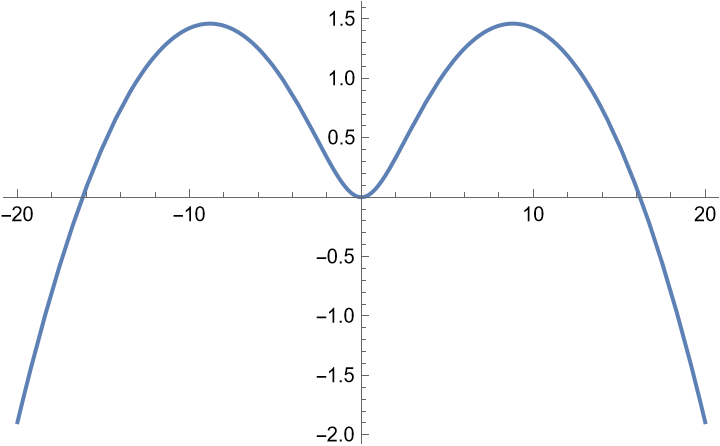}
    \caption{A plot of $y_u(x)$ when $u = (\sqrt{0.62},-\sqrt{0.19},-\sqrt{0.19})$, for $-20 < x < 20$. \\ This image was produced using Wolfram Mathematica Student Edition.}
\end{figure}

\begin{remark}
    What happens for other probability distributions (other than the exponential distribution that we have been considering)? Are we still able to define a smooth $y_u$ on which $F_u$ takes positive real values?
\end{remark}

We have our function $y_u$ which is $\mathcal{C}^\infty$ on $E_u$. By construction, we have that $y_u$ satisfies Equation (\ref{eq:angle-equation}) for all $x \in E_u$. Now if we define \[\gamma_u \coloneqq \left\{x+iy_u(x)\bigm\vert x\in E_u \right\}\] to be the graph of $y_u$, then $\gamma_u$ is exactly the contour that we seek. Indeed, the key property of $\gamma_u$ is the following:

\begin{proposition}\label{prop:real-values}
For each $t \in \gamma_u$, we have that $F_u(t)$ is a positive real number.
\end{proposition}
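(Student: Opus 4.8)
The plan is to simply unwind the construction of $y_u$: by design the lift $\Phi_u$ of $\arg F_u$ vanishes along $\gamma_u$, so $F_u$ has argument $\equiv 0 \pmod{2\pi}$ there, and since $F_u$ is never zero this already pins down $F_u$ as a positive real on the part of $\gamma_u$ with $x>0$; the cases $x=0$ and $x<0$ are then handled by a direct evaluation and by a reflection symmetry. Before any of that, I would record that $F_u$ is holomorphic and zero-free at every point of $\gamma_u$. From the definition (\ref{eq:F-def}), $F_u$ is a finite product of the entire nowhere-vanishing functions $t \mapsto e^{iu_j t}$ and the functions $t \mapsto (1+iu_j t)^{-1}$, each of which is nowhere zero and holomorphic except for a simple pole at $t = i/u_j$. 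Hence $F_u$ is meromorphic with no zeros at all, and its only poles lie on the imaginary axis. A point $x+iy_u(x) \in \gamma_u$ with $x \ne 0$ is off the imaginary axis, and the point with $x=0$ is $0$ (since $y_u(0)=0$), where $F_u(0) = 1$. So $F_u$ is holomorphic and nonzero at every point of $\gamma_u$; in particular $\arg F_u(t)$ is well-defined modulo $2\pi$ for $t\in\gamma_u$, and it suffices to prove $\arg F_u(t) \equiv 0 \pmod{2\pi}$ for all such $t$.

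For $x \in D_u$ (so $x>0$), combining (\ref{eq:Phi-u-2}) with (\ref{eq:Phi-x-y}) gives $\arg F_u(x+iy_u(x)) \equiv \Phi_u(x, y_u(x)) = 0 \pmod{2\pi}$, and together with the zero-free observation this shows $F_u(x+iy_u(x))$ is a positive real number. The case $x=0$ is immediate since $F_u(0)=1>0$. It remains to treat $x<0$ with $-x \in D_u$. Here I would invoke the elementary identity $F_u(-\bar t) = \overline{F_u(t)}$, which is immediate from (\ref{eq:F-def}) because conjugation commutes with the finite product and, using that each $u_j$ is real, $\overline{e^{iu_j t}/(1+iu_j t)} = e^{-iu_j \bar t}/(1 - iu_j \bar t) = e^{iu_j(-\bar t)}/(1+iu_j(-\bar t))$. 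Setting $s \coloneqq -x + iy_u(-x)$ and using $y_u(x) = y_u(-x)$ from the construction of the extension to $E_u$, we get $x+iy_u(x) = -\bar s$, hence $F_u(x+iy_u(x)) = F_u(-\bar s) = \overline{F_u(s)}$; since $-x \in D_u$, the case already proved gives $F_u(s) > 0$, so $F_u(x+iy_u(x))$ is a positive real number too. This exhausts $E_u$ and finishes the proof.

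As for the main obstacle: there is no deep one here, since the genuine work — constructing $y_u$ and showing it is $\mathcal{C}^\infty$ on $E_u$ — has already been done. The only points needing care are verifying that $\gamma_u$ stays clear of the poles of $F_u$ on the imaginary axis (handled by the elementary observation that $\gamma_u$ meets the imaginary axis only at $0$, where $F_u$ is regular and equal to $1$), and handling the reflected half $x<0$, on which $\Phi_u$ was never defined; the conjugation symmetry $F_u(-\bar t) = \overline{F_u(t)}$ resolves the latter cleanly.
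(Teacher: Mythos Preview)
Your proof is correct. The paper does not supply a separate proof of this proposition; it simply asserts that ``by construction, $y_u$ satisfies Equation~(\ref{eq:angle-equation}) for all $x \in E_u$'' and states the proposition as an immediate consequence. Your write-up makes explicit the pieces the paper leaves implicit: that $F_u$ is zero-free and has its poles only on the imaginary axis, that $\gamma_u$ meets the imaginary axis only at the origin where $F_u(0)=1$, and that therefore $\arg F_u$ is well defined along $\gamma_u$. For $x>0$ you invoke (\ref{eq:Phi-u-2}) and (\ref{eq:Phi-x-y}) exactly as the paper's construction dictates.

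The one place you deviate is in handling $x<0$: the paper's implicit route is that the left side of (\ref{eq:angle-equation}) is odd in $x$ once $y_u$ is even, so vanishing mod $2\pi$ on $D_u$ propagates to $-D_u$; you instead use the reflection identity $F_u(-\bar t)=\overline{F_u(t)}$. Both arguments are one line, and yours is arguably cleaner because it sidesteps any worry about the domain of the $\arctan$ in (\ref{eq:angle-equation}) (e.g.\ points where $1-u_j y_u(x)=0$), working directly with $F_u$ rather than with a particular angle formula.
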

This proposition follows from Equation (\ref{eq:Phi-u-2}).

Now that we have our contour $\gamma_u$, we would like to say the following (in order to change the integral in Theorem \ref{thm:1-e} to an easier integral):
\begin{theorem}\label{thm:change-integral}
    For each unit vector $u \in \RR^{n+1}$ such that $u$ has at least two nonzero entries, we have that the integral \[\frac{1}{2\pi}\int_{-\infty}^{+\infty} F_u(t) \, \dd t\] equals the integral \[\frac{1}{2\pi} \int_{\gamma_u} F_{u}(t) \, \Real(\dd t) = \frac{1}{2\pi} \int_{s \in E_{u}} F_{u}(s+iy_{u}(s)) \, \dd s.\]
\end{theorem}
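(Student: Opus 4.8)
The plan is to deform the contour of integration from the real line onto $\gamma_u$ via Cauchy's theorem, and then take real parts. Recall from Equation (\ref{eq:F-def}) that $F_u$ is meromorphic on $\CC$ with poles exactly at the points $i/u_j$; these all lie on the imaginary axis, and $|1/u_j|\ge 1$ because $|u_j|\le 1$. By contrast, $\gamma_u=\{x+iy_u(x):x\in E_u\}$ meets the imaginary axis only at the origin (for $x\ne 0$ the real part $x$ is nonzero), and $0$ is not a pole; so intuitively no pole stands between $\RR$ and $\gamma_u$. We will repeatedly use the elementary identity $|F_u(x+iy)|=\frac{e^{-y\sum_j u_j}}{\prod_j|1+iu_j(x+iy)|}$, in three forms: (i) on $\RR$, $|F_u(t)|=O(|t|^{-2})$, so $\int_{\RR}F_u\,\dd t$ converges absolutely (this is where ``at least two nonzero entries'' enters); (ii) on a vertical line $\operatorname{Re}(t)=R$, the bound $|1+iu_j(R+iy)|\ge|u_j|\,|R|$ gives $|F_u(R+iy)|\le e^{-y\sum_j u_j}\,C_u\,R^{-2}$ for $R\ge 1$; (iii) in a quarter-plane $\pm\operatorname{Re}(t)\ge M>0$, $F_u$ decays in all directions provided $\pm\sum_j u_j\ge 0$. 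Finally, $\overline{F_u(t)}=F_u(-t)$ for real $t$, so $\int_{\RR}F_u\,\dd t$ is real; and $\overline{F_u(\bar t)}=F_{-u}(t)$, so replacing $u$ by $-u$ reflects both $F_u$ and $\gamma_u$ across the real axis, and we may assume $\sum_j u_j\ge 0$.

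Suppose first that $\sum_j u_j=0$. Then $E_u=\RR$, the exponential factor in (ii) disappears, and from the defining relation $\Phi_u(x,y_u(x))=0$ one checks that $y_u$ grows at most linearly. For $R>0$ apply Cauchy's theorem to the region between $[-R,R]$ and the arc $\{x+iy_u(x):|x|\le R\}$, closed off by vertical segments at $\operatorname{Re}(t)=\pm R$; by (ii) these segments carry $|F_u|\le C_uR^{-2}$ and have length $O(R)$, so they contribute $O(R^{-1})\to 0$, and letting $R\to\infty$ gives $\int_{\RR}F_u(t)\,\dd t=\int_{\gamma_u}F_u(t)\,\dd t$. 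Suppose instead that $\sum_j u_j>0$. Then $E_u=(-M,M)$ is a bounded interval and $y_u(x)\to+\infty$ as $x\to\pm M$, so $\gamma_u$ escapes to $i\infty$ at both endpoints; the compensating gain is that $F_u$ decays superexponentially in the upper half-plane. Apply Cauchy's theorem to the region lying above $[-M,M]$ and below $\gamma_u$, truncated at height $H$: its boundary consists of a real segment, two vertical segments at abscissae $x_\pm(H)\to\pm M$, and an arc of $\gamma_u$. As $H\to\infty$, the real segment tends to $\int_{-M}^{M}F_u$, the arc sweeps out all of $\gamma_u$, and by dominated convergence (via (iii)) the two vertical segments tend to $\pm i\int_0^{\infty}F_u(\pm M+iy)\,\dd y$; a separate, elementary rectangle argument in the quarter-planes $\{\pm\operatorname{Re}(t)>M\}$ identifies these limits with $\pm\int_{\{\pm s>M\}}F_u(s)\,\dd s$, i.e., with the ``missing'' tails of the real-line integral. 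Reassembling the four pieces gives $\int_{\RR}F_u(t)\,\dd t=\int_{\gamma_u}F_u(t)\,\dd t$ once more. In both cases the region swept by the deformation meets the imaginary axis only at $0$, so no pole is ever crossed — this stays true even when $y_u$ oscillates and the swept region is disconnected.

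It remains to pass from $\dd t$ to $\operatorname{Re}(\dd t)$. Parametrizing $\gamma_u$ by $s\mapsto s+iy_u(s)$, $s\in E_u$, we have $\dd t=(1+iy_u'(s))\,\dd s$, so $\int_{\gamma_u}F_u(t)\,\dd t=\int_{E_u}F_u(s+iy_u(s))\,\dd s+i\int_{E_u}F_u(s+iy_u(s))\,y_u'(s)\,\dd s$. By Proposition \ref{prop:real-values} the numbers $F_u(s+iy_u(s))$ are positive reals, so the right-hand side has the form $a+ib$ with $a,b\in\RR$; since it equals the real number $\int_{\RR}F_u(t)\,\dd t$, we must have $b=0$, and hence $\int_{-\infty}^{+\infty}F_u(t)\,\dd t=\int_{E_u}F_u(s+iy_u(s))\,\dd s$. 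This is precisely the asserted identity, under the convention $\operatorname{Re}(\dd t):=\operatorname{Re}\bigl((1+iy_u'(s))\,\dd s\bigr)=\dd s$.

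The main obstacle is the case $\sum_j u_j\ne 0$. There $\gamma_u$ is not a bounded perturbation of $\RR$ but escapes to $i\infty$ at the endpoints of $E_u$, so the contour identity is not free: one must control the vertical connecting segments in the truncation and recover the tails $\int_{|s|>M}F_u$ through the auxiliary quarter-plane deformations. Making the dominated-convergence and decay estimates near the vertical asymptotes precise, and checking that oscillation of $y_u$ cannot drag a pole into the region between the two contours, are the points that demand real care; the rest is routine complex analysis.
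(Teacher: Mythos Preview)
Your proposal is essentially correct, but it diverges from the paper's argument in two places worth noting.

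\textbf{The connecting segments.} In the case $\sum_j u_j>0$ the paper chooses its fringe differently: from a point $x_1+iy_u(x_1)$ on $\gamma_u$ it goes \emph{horizontally} to $x_2+iy_u(x_1)$ and then \emph{vertically} down to $x_2$ on the real axis, with $x_1\nearrow M$ and $x_2\nearrow\infty$ independently. Because $y_u(x_1)\to+\infty$, the horizontal leg is killed by the factor $e^{-y_u(x_1)\sum u_j}$, and the vertical leg by $1/x_2^2$; so the fringe integral tends to $0$ outright, and no tail has to be recovered. Your scheme --- vertical legs at $x_\pm(H)\to\pm M$ that converge to $\pm i\int_0^\infty F_u(\pm M+iy)\,\dd y$, followed by a separate quarter-plane deformation identifying these with $\pm\int_{\{|s|>M\}}F_u(s)\,\dd s$ --- is valid, but it trades one limit for three and requires an extra dominated-convergence step. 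The paper's choice is shorter and avoids the auxiliary quarter-plane argument entirely.

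In the case $\sum_j u_j=0$ you assert that $y_u$ grows at most linearly, so that the vertical leg has length $O(R)$. This is true (asymptotically $y_u(x)/x\to\cot(m_+\pi/(n+1))$), but it is not immediate from $\Phi_u(x,y_u(x))=0$ and you do not prove it. The paper sidesteps this by bounding the vertical leg by the integral of $|F_u|$ over the \emph{entire} vertical line $\Real t=R$, using the finer estimate $|F_u(x+iy)|\le C/(x^2+\min_j(1/u_j-y)^2)$; this converges regardless of where $y_u(R)$ sits.

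\textbf{Dropping the $iy_u'$ term.} Here your route is arguably cleaner: you use that $\int_\RR F_u\,\dd t$ is real (from $\overline{F_u(t)}=F_u(-t)$ on $\RR$) together with Proposition~\ref{prop:real-values} to conclude that the imaginary part $\int_{E_u}F_u(s+iy_u(s))\,y_u'(s)\,\dd s$ must vanish. The paper instead appeals to the parity $y_u(-x)=y_u(x)$, so that $y_u'$ is odd while $F_u(s+iy_u(s))$ is even, and the integral over a symmetric interval is zero. Both arguments are short; yours has the small advantage of not needing to recall how $y_u$ was extended to negative $x$.
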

Note that the latter integral is well-defined because the integrand is always a positive real number on the path of integration. 

To show Theorem \ref{thm:change-integral}, we will use the fact that \emph{the integral of a meromorphic function does not depend on the contour, as long as one contour can be moved to the other without passing over any poles}. \textbf{\textit{This is the main idea of our paper: moving the contour of integration.}} We will approximate both integrals in Theorem \ref{thm:change-integral} by integrals over bounded contours, and then we will use Cauchy's Integral Theorem to shift the contour of integration. We must use an upper bound on the magnitude of $F_u(t)$; this is because when shifting the contour, we will incur a small error at each of the fringes, and we need to know that this error goes to zero as we take the fringes to be further and further away from the origin. 

\begin{proof}[Proof of Theorem \ref{thm:change-integral}]

To upper bound the magnitude of $F_u(t)$, we now use the fact that $u$ has at least two nonzero entries. Hence there exists a constant $C > 0$, depending only on $u$, such that for all pairs $(x,y) \in \RR^2$ with $x \not= 0$, we have \begin{equation}\label{eq:F-bound}|F_u(x+iy)| \le C\exp\left(-y\sum_{j=1}^{n+1} u_j\right)\left/\left(x^2+\min_{j=1}^{n+1} \left(\frac{1}{u_j}-y\right)^2\right)\right. .\end{equation} (In particular, the integral $\frac{1}{2\pi}\int_{-\infty}^{+\infty} F_u(t) \, \dd t$ in Theorem \ref{thm:1-e} is well-defined.)

We split into cases based on the sign of $\sum_{j=1}^{n+1}u_j$.

\textbf{Case I:} $\sum_{j=1}^{n+1} u_j > 0$. \\
In this case, $E_u = (-m_+\pi / \sum_{j=1}^{n+1} u_j, m_+\pi / \sum_{j=1}^{n+1} u_j)$ is a finite interval. Now let us choose $x_1 \in D_u$ and $x_2 > x_1$. Let $K_{x_1,x_2}$ denote the path consisting of the line segment from $x_1+iy_u(x_1)$ to $x_2 + iy_u(x_1)$, concatenated with the line segment from $x_2 + iy_u(x_1)$ to $x_2$; we will use $K_{x_1,x_2}$ as a fringe to relate the two contours ($\gamma_u$ and the real axis). Equation (\ref{eq:F-bound}) implies that $|F_u(x+iy)| \le C\exp\left(-y\sum_{j=1}^{n+1} u_j\right)/x^2$ for all $x\not= 0$. By breaking $K_{x_1,x_2}$ into its constituent line segments, we obtain \[\left|\int_{K_{x_1,x_2}} F_{u}(t) \, \dd t\right| \le C\exp\left(-y_{u}(x_1)\sum_{j=1}^{n+1} u_j\right)\big/x_1 + (C/x_2^2)\left|\int_0^{y_{u}(x_1)} \exp\left(-t\sum_{j=1}^{n+1} u_j \right) \, \dd t\right|.\] 

Now note that $\lim_{x \nearrow m_+\pi / \sum_{j=1}^{n+1} u_j} y_u(x) = +\infty$. If $x_1$ is chosen sufficiently close to the right endpoint of $D_u$, we will have $y_u(x_1) > 0$, and then we can write \[\left|\int_{K_{x_1,x_2}} F_u(t) \, \dd t\right| \le C\exp\left(-y_{u}(x_1)\sum_{j=1}^{n+1} u_j\right)\big/x_1 + (C/x_2^2)\int_0^{+\infty} \exp\left(-t\sum_{j=1}^{n+1} u_j \right) \, \dd t.\] Thus \[\lim_{\substack{x_1 \nearrow m_+\pi / \sum_{j=1}^{n+1} u_j \\ x_2 \nearrow +\infty}} \left|\int_{K_{x_1,x_2}} F_{u}(t) \, \dd t\right| = 0;\] i.e. the integral along the fringe $K_{x_1,x_2}$ goes to zero, from which we conclude \[\lim_{x \nearrow m_+\pi / \sum_{j=1}^{n+1} u_j} \int_{-x}^x F_{u}(s+iy_{u}(s)) \, \left(\dd s + i \frac{\dd y_{u}(s)}{\dd s} \, \dd s \right) = \int_{-\infty}^{+\infty} F_{u}(t) \, \dd t\] by Cauchy's Integral Theorem (this step is where we have \textbf{\textit{shifted the contour}} from the real axis to $\gamma_u$).

We are not done yet, since the integrand on the left-hand-side of the above equation still contains an extra term of $F_u(s+iy_u(s)) \cdot i \frac{\dd y_u(s)}{\dd s} \, \dd s$. Since $y_u(x)$ is an even function of $x$, we know that $\frac{\dd y_{u}(x)}{\dd x}$ is an odd function of $x$. Additionally, $F_{u}(x+iy_{u}(x))$ is an even function of $x$, so $\int_{-x}^x F_{u}(s+iy_{u}(s))\frac{\dd y_{u}(s)}{\dd s} \, \dd s = 0$ and we may delete the extra term from the integrand: \[\lim_{x \nearrow m_+\pi / \sum_{j=1}^{n+1} u_j} \int_{-x}^x F_{u}(s+iy_{u}(s)) \, \dd s = \int_{-\infty}^{+\infty} F_{u}(t) \, \dd t,\] from which it follows that \begin{equation}\label{eq:move-contour}\int_{s \in E_{u}} F_{u}(s+iy_{u}(s)) \, \dd s = \int_{-\infty}^{+\infty} F_{u}(t) \, \dd t.\end{equation} This proves Theorem \ref{thm:change-integral}, as desired. This concludes Case I.

\textbf{Case II:} $\sum_{j=1}^{n+1} u_j < 0$. \\
This case is similar to Case I.

\textbf{Case III:} $\sum_{j=1}^{n+1} u_j = 0$. \\
In this case, we have that $E_u$ is the entire real line $\RR$. Then Equation (\ref{eq:F-bound}) becomes \[|F_u(x+iy)| \le C\left/\left(x^2+\min_{j=1}^{n+1} \left(\frac{1}{u_j}-y\right)^2\right)\right. \] for all $x \not= 0$. In particular, we have $|F_u(x+iy)| \le C/x^2$. For convenience, let $M > 0$ be large enough so that $1/u_j \in (-M,M)$ for all $j$, so that we have $|F_u(x+iy)| \le C/(x^2+(|y|-M)^2)$ if $|y| \ge M$. For any $x>0$, we may define the path $K_x$ to be the line segment from $x+iy_u(x)$ to $x$. As before, we will use $K_x$ as a fringe to relate the two contours ($\gamma_u$ and the real axis). We obtain \begin{equation*}\begin{split}\left|\int_{K_x} F_u(t) \, \dd t \right| &\le \int_{x-i\infty}^{x+i\infty} |F_u(t)| \, |\dd t| \\
&= \int_{x-i(M+1)}^{x+i(M+1)} |F_u(t)| \, |\dd t| + \int_{x-i\infty}^{x-i(M+1)} |F_u(t)| \, |\dd t| + \int_{x+i(M+1)}^{x+i\infty} |F_u(t)| \, |\dd t|\\
&\le (C/x^2)(2\cdot (M+1)) + \int_1^{+\infty} C/(x^2+s^2) \, \dd s + \int_1^{+\infty} C/(x^2+s^2) \, \dd s\end{split}\end{equation*} where the first term comes from the bound $|F_{u}(x+iy)| \le C/x^2$ for $|y| \le M+1$, and the other two terms come from the bound $|F_{u}(x+iy)| \le C/(x^2+(|y|-M)^2)$ for $|y| \ge M+1$. Thus \[\lim_{x \nearrow +\infty} \left|\int_{K_x} F_{u}(t) \, \dd t \right| = 0;\] i.e. the integral along the fringe $K_x$ goes to zero, from which we conclude \[\lim_{x \nearrow +\infty} \int_{-x}^x F_{u}(s+iy_{u}(s)) \, \left(\dd s + i \frac{\dd y_{u}(s)}{\dd s} \, \dd s \right) = \int_{-\infty}^{+\infty} F_{u}(t) \, \dd t\] by Cauchy's Integral Theorem (this step is where we have \textbf{\textit{shifted the contour}} from the real axis to $\gamma_u$). Arguing as in Case I, we note that $\int_{-x}^x F_{u}(s+iy_{u}(s))\frac{\dd y_{u}(s)}{\dd s} \, \dd s = 0$, so we may delete the $F_u(s+iy_u(s)) \cdot i \frac{\dd y_u(s)}{\dd s} \, \dd s$ term from the integrand, resulting in \[\lim_{x \nearrow +\infty} \int_{-x}^x F_{u}(s+iy_{u}(s)) \, \dd s = \int_{-\infty}^{+\infty} F_{u}(t) \, \dd t,\] which proves Theorem \ref{thm:change-integral}, as desired. This concludes Case III.

\end{proof}

We have proven that Theorem \ref{thm:change-integral} holds in all cases. This allows us to change the integral in Theorem \ref{thm:1-e}. We have successfully applied our main idea: \textbf{\textit{changing the contour of integration}}.

It remains to show that $\frac{1}{2\pi} \int_{s \in E_{u}} F_{u}(s+iy_{u}(s)) \, \dd s \ge \frac{1}{e}$ for each $u$ with at least two nonzero entries. Defining \[\tilde{F}_u(x) \coloneqq F_u(x+iy_u(x)),\] we wish to show the following:

\begin{theorem}
    For each unit vector $u \in \RR^{n+1}$ with at least two nonzero entries, we have \[\frac{1}{2\pi} \int_{s \in E_{u}} \tilde{F}_{u}(s) \, \dd s \ge \frac{1}{e}.\] 
\end{theorem} 

This is a special case of the following theorem:

\begin{theorem}\label{thm:1-e-2}
    For each unit vector $u \in \RR^{n+1}$ (possibly of form $(1,0,0,0,\dots,0)$), we have \[\frac{1}{2\pi} \int_{s \in E_{u}} \tilde{F}_{u}(s) \, \dd s \ge \frac{1}{e}.\] Equality occurs if $u$ is of form $(1,0,0,0,\dots,0)$.
\end{theorem}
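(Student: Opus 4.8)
The plan is to compare $\tilde F_u$ pointwise with the trivial one‑coordinate vector $v=(1)\in\RR^1$, for which $\gamma_v$, $y_v$ and $\tilde F_v$ are fully explicit, and then to integrate. Discarding zero entries changes none of $F_u,\gamma_u,\tilde F_u,E_u$, so I may assume every $u_j\neq0$; if only one entry survives then $u=\pm v$ (as padded vectors), $\tilde F_u=\tilde F_v$ and $E_u=E_v=(-\pi,\pi)$, which is the equality case. In general $E_u\supseteq(-\pi,\pi)$, $\tilde F_u>0$ on $E_u$ by Proposition~\ref{prop:real-values}, and one computes directly that $E_v=(-\pi,\pi)$, $y_v(x)=1-x\cot x$, $\tilde F_v(x)=\frac{\sin x}{x}e^{x\cot x-1}$. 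Thus Theorem~\ref{thm:1-e-2} will follow from two statements: \textbf{(A)} $\tilde F_u(x)\ge\tilde F_v(x)$ for all $x\in(-\pi,\pi)$, and \textbf{(B)} $\frac1{2\pi}\int_{-\pi}^{\pi}\tilde F_v(s)\,\dd s=\frac1e$, since then $\frac1{2\pi}\int_{E_u}\tilde F_u\ge\frac1{2\pi}\int_{-\pi}^{\pi}\tilde F_u\ge\frac1{2\pi}\int_{-\pi}^{\pi}\tilde F_v=\frac1e$, with equality throughout when $u=(1,0,\dots,0)$. For (B) I would compute $\int_{\RR}\frac{e^{it}}{1+it}\,\dd t=\frac{2\pi}{e}$ by Jordan's lemma and the residue at $t=i$, and then move this contour onto $\gamma_v$: no pole is crossed, since the unique pole $t=i$ is never passed ($y_v(0)=0<1$ and the deformation is vertical fibrewise), and the fringe integrals vanish once the horizontal cutoff $x_0\nearrow\pi$ is coupled to the vertical cutoff $R=y_v(x_0)\nearrow\infty$, exactly as in Case~I of the proof of Theorem~\ref{thm:change-integral} (this coupling is needed because $F_v$ decays only like $1/|t|$ on horizontal lines). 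Hence $\int_{-\pi}^{\pi}\tilde F_v(s)\,\dd s=\int_{\gamma_v}\frac{e^{it}}{1+it}\,\Real(\dd t)=\frac{2\pi}{e}$.

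For (A): both $\tilde F_u$ and $\tilde F_v$ are even with value $1$ at $0$, so it suffices to prove $\frac{\dd}{\dd x}\log\tilde F_u(x)\ge\frac{\dd}{\dd x}\log\tilde F_v(x)$ on $(0,\pi)$ and integrate from $0$. Write $z=z_u(x)=x+iy_u(x)$ and $A_u(z)=\sum_j\frac{u_j^2}{1+iu_jz}$. From $\frac{\dd}{\dd z}\log F_u(z)=-zA_u(z)$, the holomorphy of $\log|F_u|+i\Phi_u$, and $\Phi_u(x,y_u(x))\equiv0$, the Cauchy--Riemann equations and the implicit function theorem give
\[
\frac{\dd}{\dd x}\log\tilde F_u(x)=\frac{(\partial_x\Phi_u)^2+(\partial_y\Phi_u)^2}{\partial_y\Phi_u}=-\frac{|zA_u(z)|^2}{\Real\bigl(zA_u(z)\bigr)}.
\]
Setting $c_j=\frac1{1+iu_jz}$ one finds $|zA_u(z)|^2=(x^2+y_u(x)^2)\bigl|\sum_j u_j^2c_j\bigr|^2$ and $\Real(zA_u(z))=x\sum_j u_j^2|c_j|^2$; since $\sum_j u_j^2=1$, the Cauchy--Schwarz inequality $\bigl|\sum_j u_j^2c_j\bigr|^2\le\sum_j u_j^2|c_j|^2$ gives
\[
\frac{\dd}{\dd x}\log\tilde F_u(x)\ge-\frac{x^2+y_u(x)^2}{x}.
\]
The same computation for $v$, where this Cauchy--Schwarz is an equality, gives $\frac{\dd}{\dd x}\log\tilde F_v(x)=-\frac{x^2+y_v(x)^2}{x}$ exactly. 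So (A) reduces to the inequality $|y_u(x)|\le y_v(x)$ on $(0,\pi)$, the other ingredient besides Cauchy--Schwarz.

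To prove $|y_u(x)|\le y_v(x)$: since $\Phi_u(x,\cdot)$ is strictly decreasing with $\Phi_u(x,y_u(x))=0$, the bound $y_u(x)\le y_v(x)$ amounts to $\Phi_u(x,y_v(x))\le0$, i.e.\ to $x\sum_j u_j\le\sum_j\arg\bigl(1+iu_jz_v(x)\bigr)$ where $z_v(x)=x+i(1-x\cot x)$; as $|u_j|\le1$, this follows from the one‑variable claim $\arg(1+isz_v(x))\ge sx$ for every $s\in[-1,1]$. Fixing $x$ and putting $h(s)=\arg(1+isz_v(x))-sx$, one has $h(0)=0$, $h(1)=0$ (because $1+iz_v(x)=\frac{xe^{ix}}{\sin x}$ has argument $x$), and, using $\frac1{\eta(s)}=\frac1{z_v}+is$ with $\eta(s)=\frac{z_v}{1+isz_v}$,
\[
h'(s)=\Real\eta(s)-x=\frac{xs\bigl(2y_v(x)-s(x^2+y_v(x)^2)\bigr)}{(x^2+y_v(x)^2)s^2-2y_v(x)s+1},
\]
whose denominator is everywhere positive (discriminant $-4x^2$). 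Because $0<\frac{2y_v(x)}{x^2+y_v(x)^2}<1$ (the upper bound since $x^2+(1-y_v(x))^2=\frac{x^2}{\sin^2x}>1$), the function $h$ increases then decreases on $[0,1]$ and is decreasing on $[-1,0]$, so $h(0)=h(1)=0$ forces $h\ge0$ on $[-1,1]$. Applying the claim to $u$ and to $-u$ and using $y_{-u}=-y_u$ yields $-y_v(x)\le y_u(x)\le y_v(x)$, hence $|y_u(x)|\le y_v(x)$; combined with the previous paragraph this gives (A), and hence the theorem.

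I expect the main obstacle to be precisely this inequality $|y_u(x)|\le y_v(x)$ --- geometrically, the statement that over $(-\pi,\pi)$ the contour $\gamma_u$ lies in the region bounded by $\gamma_v$ and its reflection. Its reduction to the one‑variable estimate about $\arg$ uses both the strict monotonicity of $\Phi_u$ in $y$ and the constraint $|u_j|\le1$ (equivalently $\sum u_j^2=1$), which is exactly where the geometry of the regular simplex enters and why the method does not obviously transfer to other convex bodies; by contrast, deriving the formula for $\frac{\dd}{\dd x}\log\tilde F_u$, the Cauchy--Schwarz step, and the evaluation (B) are all routine, modulo the mild fringe estimate caused by $F_v\notin L^1(\RR)$.
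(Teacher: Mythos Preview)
Your proof is correct and follows the paper's architecture exactly: reduce to the pointwise bound $\tilde F_u\ge\tilde F_v$ on $(-\pi,\pi)$, prove that via the derivative inequality $\frac{\dd}{\dd x}\log\tilde F_u\ge -\frac{x^2+y_u^2}{x}$ (obtained by Cauchy--Schwarz) together with $|y_u|\le y_v$, and evaluate the $v$-integral by a residue. Your packaging is somewhat cleaner---you derive $\frac{\dd}{\dd x}\log\tilde F_u=-\frac{|zA_u(z)|^2}{\Real(zA_u(z))}$ directly from Cauchy--Riemann and apply a single complex Cauchy--Schwarz $\bigl|\sum u_j^2c_j\bigr|^2\le\sum u_j^2|c_j|^2$, whereas the paper computes the derivative by hand and splits the estimate into two real Cauchy--Schwarz inequalities (its Equations~(\ref{eq:cauchy-1}) and~(\ref{eq:cauchy-2}))---but these are cosmetic differences.

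The one genuinely different step is your proof of $|y_u|\le y_v$ (the paper's Lemma~\ref{lem:vuv-bound}). The paper argues \emph{dynamically}: it tracks $\Phi_v$ along $\gamma_u$, uses the differential inequality $y_u'\le\frac{-y_u+x^2+y_u^2}{x}$ to show $\frac{\dd}{\dd x}\Phi_v(x,y_u(x))\ge0$, and concludes $\Phi_v(x,y_u(x))\ge0$, hence $y_u\le y_v$. You argue \emph{statically}: you evaluate $\Phi_u$ at the point $(x,y_v(x))$ and show it is $\le0$ by reducing to the one-variable inequality $\arg(1+isz_v(x))\ge sx$ for $|s|\le1$, which you verify by computing $h'$ and using $h(0)=h(1)=0$. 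Both arguments exploit $|u_j|\le1$ at the same moment, but yours avoids the ODE for $y_u$ entirely and makes the role of the extremal direction $s=1$ (i.e.\ $u=v$) more transparent; the paper's argument, on the other hand, would adapt more readily if one wanted to compare against a curve other than $\gamma_v$, since it never uses the explicit formula $y_v=1-x\cot x$.
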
 

We will henceforth focus on proving Theorem \ref{thm:1-e-2}.

\section{A pointwise bound via differential equations}\label{sec:pointwise}

Let us first prove the equality case of Theorem \ref{thm:1-e-2}. For convenience, define the unit vector $v \coloneqq (1) \in \RR^1$, and let $y_v,\gamma_v,D_v,E_v,F_v, \tilde{F}_v$ be defined by substituting $v$ for $u$ in the definitions of $y_u, \gamma_u, D_u, E_u, F_u, \tilde{F}_u$ (in Equation (\ref{eq:F-def}) and in Section \ref{sec:change-contour}). Note that $E_v = (-\pi,\pi)$. We calculate explicitly $F_{v}(t) = e^{it}/(1+it)$ and $y_{v}(x) = 1-x\cot x$ when $x \in (-\pi,\pi)$. It follows that $|F_{v}(x+iy)| = e^{-y}/\sqrt{(1-y)^2+x^2}$ whenever $(x,y) \not= (0,1)$. 


In order to prove the equality case of Theorem \ref{thm:1-e-2}, we wish to show that \begin{equation}\label{eq:1-eq}\frac{1}{2\pi} \int_{-\pi}^{\pi} \tilde{F}_v(s) \, \dd s = \frac{1}{e}.\end{equation} We will use Cauchy's Residue Theorem. For $x \in (0,\pi)$, let $P_x$ denote the path which is the line segment from $-x+iy_v(x)$ to $x + iy_v(x)$. We have \[\left|\int_{P_x} F_v(t) \, \dd t \right| \le 2x \cdot e^{-y_v(x)}/(y_v(x) - 1) \le 2\pi \cdot e^{-y_v(x)}/(y_v(x) - 1)\] whenever $x$ is such that $y_v(x) > 1$. Since $\lim_{x \nearrow \pi} y_v(x) = +\infty$, we have \[\lim_{x \nearrow \pi} \left|\int_{P_x} F_v(t) \, \dd t \right| = 0.\] Thus we can write \begin{equation*}
    \begin{split}
        \frac{1}{2\pi} \int_{-\pi}^{\pi} \tilde{F}_v(s) \, \dd s &= \frac{1}{2\pi} \int_{-\pi}^{\pi} F_v(s+iy_v(s)) \, \dd s \\ 
        &= \lim_{x \nearrow \pi}\frac{1}{2\pi}\int_{-x}^{x} F_v(s+iy_v(s)) \, \dd s \\ 
        &= \lim_{x \nearrow \pi}\frac{1}{2\pi}\int_{-x}^{x} F_v(s+iy_v(s)) \, \left(\dd s + i\frac{\dd y_v(s)}{\dd s} \, \dd s \right) \\ 
        &= \lim_{x \nearrow \pi}\frac{1}{2\pi}\int_{\text{follow }\gamma_v\text{ from } -x+iy_v(-x) \text{ to } x + iy_v(x)} F_v(t) \, \dd t \\ 
        &= \lim_{x \nearrow \pi}\frac{1}{2\pi}\int_{\substack{\text{follow }\gamma_v\text{ from } -x+iy_v(-x) \text{ to } x + iy_v(x) \\ \text{then follow } P_x \text{ from } x+iy_v(x) \text{ back to } -x + iy_v(-x)}} F_v(t) \, \dd t. \\ 
    \end{split}
\end{equation*} Since $F_v(t)$ has a simple pole at $t=i$, we can use the Cauchy Residue Theorem to obtain \[\frac{1}{2\pi} \int_{-\pi}^{\pi} \tilde{F}_v(s) \, \dd s = \lim_{x \nearrow \pi}\frac{1}{2\pi} 2\pi i \cdot \Res(F_v,i) = \lim_{x \nearrow \pi}e^{-1} = e^{-1},\] proving Equation (\ref{eq:1-eq}), and thus proving the equality case of Theorem \ref{thm:1-e-2}.

Now that we know that $v$ achieves the equality case in Theorem \ref{thm:1-e-2}, we can prove Theorem \ref{thm:1-e-2} just by showing $\int_{s \in E_u} \tilde{F}_u(s) \, \dd s \ge \int_{s \in E_v} \tilde{F}_v(s) \, \dd s$ for each $u$. Note that for any $u$, we have $E_v = (-\pi,\pi) \subseteq E_{u}$. Thus we can write \[\int_{s \in E_u} \tilde{F}_u(s) \, \dd s \ge \int_{s \in E_v} \tilde{F}_u(s) \, \dd s\] since the integrand is positive. It follows that it suffices to prove $\int_{s \in E_v} \tilde{F}_u(s) \, \dd s \ge \int_{s \in E_v} \tilde{F}_v(s) \, \dd s$, which would be implied by the following pointwise bound:

\begin{theorem}\label{thm:pointwise}
    For each unit vector $u \in \RR^{n+1}$, and for any $x \in (-\pi,\pi)$, we have $\tilde{F}_{u}(x) \ge \tilde{F}_{v}(x)$.
\end{theorem}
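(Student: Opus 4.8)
The plan is to compare $\tilde F_u$ and $\tilde F_v$ via their logarithmic derivatives, exploiting that both are positive real-valued on the relevant interval. Since $\tilde F_u(x) > 0$ and $\tilde F_v(x) > 0$ for $x \in (-\pi,\pi)$, and since both are even functions with $\tilde F_u(0) = \tilde F_v(0) = 1$ (the value of $F_w$ at $0$ is $1$ for any unit vector $w$, and $y_w(0)=0$), it suffices to show that $\frac{\dd}{\dd x}\log \tilde F_u(x) \ge \frac{\dd}{\dd x}\log \tilde F_v(x)$ for $x \in (0,\pi)$ — this is the inequality labelled (\ref{eq:log-ineq}) in the introduction's sketch. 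Integrating this from $0$ to $x$ gives $\log\tilde F_u(x) \ge \log\tilde F_v(x)$, hence the claim; the case $x \in (-\pi,0)$ follows by evenness.

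First I would compute $\frac{\dd}{\dd x}\log \tilde F_u(x)$ explicitly. Writing $t = x + iy_u(x)$, we have $\log F_u(t) = i t\sum_j u_j - \sum_j \log(1+iu_jt)$, so along the contour, using the chain rule and $\tilde F_u(x) = F_u(x+iy_u(x))$,
\[
\frac{\dd}{\dd x}\log\tilde F_u(x) = \left(1 + i y_u'(x)\right)\left(i\sum_{j=1}^{n+1} u_j - \sum_{j=1}^{n+1}\frac{iu_j}{1+iu_j t}\right).
\]
Because $\tilde F_u(x)$ is real and positive, its logarithmic derivative is real, so this expression equals its own real part; I would extract that real part in terms of $x$, $y_u(x)$, and the quantities $\frac{1}{u_j}-y_u(x)$. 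The key structural fact is that the ``good angle'' condition $\Phi_u(x,y_u(x))=0$ from Equation (\ref{eq:Phi-x-y}) should let me eliminate the imaginary-part constraint and also pin down $y_u'(x)$ via implicit differentiation of $\Phi_u$. After this reduction I expect $\frac{\dd}{\dd x}\log\tilde F_u(x)$ to be expressible as a ratio of sums involving the terms $x^2 + (\frac{1}{u_j}-y_u(x))^2$ — i.e. the squared moduli $|1+iu_j t|^2/u_j^2$ — which is where the ``curious inequality'' (Equations (\ref{eq:cauchy-1}) and (\ref{eq:cauchy-2})), presumably a Cauchy–Schwarz-type estimate on these weighted sums, comes in: it should bound the $u$-version of this ratio below by the $v = (1)$ version.

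The main obstacle, I expect, is precisely establishing that Cauchy-type inequality (\ref{eq:cauchy-1})–(\ref{eq:cauchy-2}) and verifying that the value of $y_u(x)$ is controlled well enough for it to apply — in particular, that the contour height $y_u(x)$ coming from the implicit equation $\Phi_u(x,y_u(x))=0$ stays in a range (something like $y_u(x) < 1/\max_j |u_j|$, so the denominators don't vanish) where the comparison is valid on all of $(0,\pi)$. A secondary subtlety is handling indices with $u_j = 0$ (these simply contribute nothing, since $\frac{e^{iu_j t}}{1+iu_j t} \equiv 1$, so one discards them as already done in Section~\ref{sec:change-contour}) and confirming that when $u$ reduces to $(1,0,\dots,0)$ one recovers exactly $\tilde F_v$, giving the equality case. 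Once the pointwise derivative comparison is in hand, the integration argument and the reduction chain $\text{Theorem~\ref{thm:pointwise}} \Rightarrow \text{Theorem~\ref{thm:1-e-2}} \Rightarrow \text{Theorem~\ref{thm:1-e}} \Rightarrow \text{Theorem~\ref{thm:minimizer-prob-strong}} \Rightarrow \text{Theorem~\ref{thm:main}}$ is already laid out in the preceding sections.
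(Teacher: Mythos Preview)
Your proposal follows essentially the same route as the paper: reduce to $x\in(0,\pi)$ by evenness, prove the derivative comparison $\frac{\dd}{\dd x}\log\tilde F_u(x)\ge\frac{\dd}{\dd x}\log\tilde F_v(x)$ by computing the left side as a ratio of sums in $x^2+(\tfrac{1}{u_j}-y_u)^2$ and applying Cauchy--Schwarz (Equations (\ref{eq:cauchy-1})--(\ref{eq:cauchy-2})). One clarification: the control on $y_u$ that you anticipate is not needed to make the Cauchy--Schwarz step valid (that step works unconditionally and yields $\frac{\dd}{\dd x}\log\tilde F_u(x)\ge -\tfrac{x^2+y_u^2}{x}$); rather, the precise bound you need afterward is $|y_u(x)|\le y_v(x)$ on $(0,\pi)$, so that $-\tfrac{x^2+y_u^2}{x}\ge -\tfrac{x^2+y_v^2}{x}=\frac{\dd}{\dd x}\log\tilde F_v(x)$. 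In the paper this is Lemma~\ref{lem:vuv-bound}, and its proof is itself a short but nontrivial differential-inequality argument (one shows $\Phi_v(x,y_u(x))$ is nondecreasing in $x$ using the bound $y_u'\le\tfrac{-y_u+x^2+y_u^2}{x}$ obtained from the implicit ODE for $y_u$); this is the piece your sketch does not yet isolate.
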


The remainder of this section will be devoted to proving Theorem \ref{thm:pointwise}.

We first note that we only have to prove Theorem \ref{thm:pointwise} when $x \in (0,\pi)$ (since $\tilde{F}_u(0) = \tilde{F}_v(0) = 1$, and since $\tilde{F}_u(x)$, $\tilde{F}_v(x)$ are both even functions of $x$).

It suffices to show that $\log \tilde{F}_u(x) \ge  \log \tilde{F}_v(x)$ for all $x \in (0,\pi)$. To do this, we will show that \begin{equation}\label{eq:log-ineq}\frac{\dd}{\dd x}\left(\log \tilde{F}_u(x)\right) \ge \frac{\dd}{\dd x}\left(\log \tilde{F}_v(x)\right)\end{equation} holds for every $x \in (0,\pi)$; this suffices because $\tilde{F}_u(0) = \tilde{F}_v(0) = 1$ as noted above.

From now on, we suppress the variable $x$ when writing $y_u$, so we write simply $y_u$ and $y_v$ instead of $y_u(x)$ and $y_v(x)$. Now we will give a differential equation that $y_u$ must satisfy. For all $x \in D_u$, we have $\Phi_u(x,y_u) = 0$, which implies \[\sum_{j:u_j>0}\arccot \frac{\frac{1}{u_j}-y_u}{x} - \sum_{j:u_j<0} \arccot \frac{-(\frac{1}{u_j}-y_u)}{x} = x \sum_{j=1}^{n+1} u_j  .\] Differentiating with respect to $x$, we obtain \[\sum_{j=1}^{n+1}\left(-\frac{-\frac{\frac{1}{u_j}-y_u}{x^2}+\frac{-y_u'}{x}}{1+\left(\frac{\frac{1}{u_j}-y_u}{x}\right)^2}\right) = \sum_{j=1}^{n+1} u_j\] where $y_u'$ denotes $\frac{\dd}{\dd x}y_u$. We can further simplify to \[\sum_{j=1}^{n+1} \frac{\frac{1}{u_j}-y_u+x y_u'}{x^2+\left(\frac{1}{u_j}-y_u\right)^2} = \sum_{j=1}^{n+1} u_j.\] Rearranging terms yields \begin{equation}\label{eq:diff-y}\sum_{j=1}^{n+1}\frac{x}{x^2+\left(\frac{1}{u_j}-y_u\right)^2} \cdot y_u' = \sum_{j=1}^{n+1}\frac{-y_u+ u_j \cdot (x^2+ y_u^2)}{x^2+\left(\frac{1}{u_j}-y_u\right)^2}\end{equation} which is our desired differential equation, valid for all $x \in D_u$. In particular, by plugging in $v$ for $u$, we obtain \begin{equation*}
    y_v' = \frac{-y_v + (x^2 + y_v^2)}{x},
\end{equation*} valid for all $x \in (0,\pi)$.

From Equation (\ref{eq:diff-y}) and the fact that each $u_j \in [-1,1]$, we obtain \begin{equation*}
    \begin{split}
        \sum_{j=1}^{n+1}\frac{x}{x^2+\left(\frac{1}{u_j}-y_u\right)^2} \cdot y_u' &\le \sum_{j=1}^{n+1}\frac{-y_u+ 1 \cdot (x^2+ y_u^2)}{x^2+\left(\frac{1}{u_j}-y_u\right)^2} \\
        &= \sum_{j=1}^{n+1}\frac{-y_u+ (x^2+ y_u^2)}{x^2+\left(\frac{1}{u_j}-y_u\right)^2} \\
    \end{split}
\end{equation*} from which it follows that \begin{equation}\label{eq:u-diff}y_u' \le \frac{-y_u + (x^2 + y_u^2)}{x}\end{equation} for all $x \in D_u$.

We must first prove an auxiliary lemma relating $y_u$ and $y_v$:

\begin{lemma}\label{lem:vuv-bound}
    For each $x \in (-\pi,\pi)$, we have $-y_v \le y_u \le y_v$.
\end{lemma}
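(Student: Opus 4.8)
The plan is to prove both inequalities at once by an ODE comparison argument, exploiting the fact that $y_v$ and $-y_v$ are exact solutions of the ``extremal'' versions of the differential (in)equality that $y_u$ satisfies, and then to deal by hand with the fact that all the ODEs in sight are singular at $x=0$. By evenness of $y_u$ and $y_v$ it is enough to prove $-y_v \le y_u \le y_v$ on $(0,\pi)$. On that interval Equation~(\ref{eq:u-diff}) gives $y_u' \le \frac{-y_u + x^2 + y_u^2}{x}$; running the same manipulation of Equation~(\ref{eq:diff-y}) but invoking $u_j \ge -1$ in place of $u_j \le 1$ (so that $u_j(x^2+y_u^2) \ge -(x^2+y_u^2)$) yields the companion lower bound $y_u' \ge \frac{-y_u - x^2 - y_u^2}{x}$. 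On the other hand $y_v$ solves $y_v' = \frac{-y_v + x^2 + y_v^2}{x}$ (computed in Section~\ref{sec:pointwise}), and a one-line substitution shows that $w := -y_v$ solves $w' = \frac{-w - x^2 - w^2}{x}$. Thus $y_v$ (resp.\ $-y_v$) solves the equation obtained by turning the upper (resp.\ lower) bound on $y_u'$ into an equality, and $y_u(0)=y_v(0)=-y_v(0)=0$.

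For the upper bound set $d := y_v - y_u$. Subtracting $y_u' \le \frac{-y_u+x^2+y_u^2}{x}$ from $y_v' = \frac{-y_v+x^2+y_v^2}{x}$ and factoring the difference of the two right-hand sides gives the linear differential inequality $d' \ge p(x)\,d$ on $(0,\pi)$, with $p(x) := \frac{y_u(x)+y_v(x)-1}{x}$. Fix $c \in (0,\pi)$ and put $D(x) := d(x)\exp\bigl(-\int_c^x p(s)\,\dd s\bigr)$, so that $D' \ge 0$ and $D$ is nondecreasing on $(0,\pi)$. Since $y_u$ is real analytic and even at $0$ with $y_u(0)=0$ (established in Section~\ref{sec:change-contour}) and $y_v(x)=1-x\cot x=\tfrac{x^2}{3}+O(x^4)$, we have $y_u(x),y_v(x)=O(x^2)$ as $x\searrow 0$; hence $p(s)=-\tfrac1s+O(s)$ near $0$, so $\exp\bigl(-\int_c^x p\bigr)=\Theta(x)$ and $d(x)=O(x^2)$, giving $D(x)=O(x^3)\to 0$ as $x\searrow 0$. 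A nondecreasing function whose infimum is $0$ is nonnegative, so $D\ge 0$, i.e.\ $y_u\le y_v$ on $(0,\pi)$. The lower bound $-y_v\le y_u$ is obtained by the identical argument applied to $y_u+y_v$, using instead the inequality $y_u'\ge\frac{-y_u-x^2-y_u^2}{x}$, the comparison solution $-y_v$, and the multiplier $q(x)=\frac{-1-y_u(x)+y_v(x)}{x}$, which again behaves like $-\tfrac1s$ near $0$.

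The one genuinely delicate point is the singularity at $x=0$: the standard ODE comparison theorems need a locally Lipschitz right-hand side and an initial condition at an interior point, and here division by $x$ destroys both as $x\searrow 0$. I expect this to be the main obstacle, and it is handled precisely by the integrating-factor bookkeeping together with the quantitative near-origin asymptotics $y_u,y_v=O(x^2)$ (which in turn rest on the real-analyticity of $y_u$ at $0$ proved earlier): these make $\lim_{x\searrow 0}D(x)=0$ hold with room to spare, so monotonicity of $D$ upgrades to $D\ge 0$. Everything else—differentiating Equation~(\ref{eq:diff-y}), deriving the two-sided bound on $y_u'$, and factoring the differences of the rational right-hand sides—is routine.
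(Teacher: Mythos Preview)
Your proof is correct, but it takes a different route from the paper's. The paper does not set up an integrating factor directly on $d=y_v-y_u$; instead it introduces the auxiliary function $\Phi_v(x,y)=x-\arccot\frac{1-y}{x}$ (which vanishes identically along $\gamma_v$ and is strictly decreasing in $y$) and shows, using the same differential inequality $y_u'\le\frac{-y_u+x^2+y_u^2}{x}$ that you use, that $\frac{\dd}{\dd x}\Phi_v(x,y_u(x))\ge 0$. Since $\Phi_v(x,y_u(x))\to 0$ as $x\searrow 0$, this gives $\Phi_v(x,y_u(x))\ge 0=\Phi_v(x,y_v(x))$ and hence $y_u\le y_v$. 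The lower bound is then obtained from the symmetry $y_{-u}=-y_u$ rather than by a second, parallel comparison argument.

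The trade-offs: the paper's choice of $\Phi_v$ amounts to a ready-made Lyapunov function whose derivative collapses to an exact zero after one substitution, and at the origin it needs only the qualitative fact $y_u(x)\to 0$. Your route is the textbook ODE comparison and is perhaps more discoverable, but it forces you to track the singular integrating factor and to invoke the quantitative asymptotics $y_u,y_v=O(x^2)$ (hence the real-analyticity established in Section~\ref{sec:change-contour}) to push $D(x)\to 0$. Both arguments ultimately rest on the same differential inequality for $y_u'$, so the difference is one of packaging rather than of underlying idea.
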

\begin{proof}
    It suffices to prove that $y_u \le y_v$ for all $x \in (0,\pi)$ (because we can obtain the other cases by replacing $x$ with $-x$ and by replacing $u$ with $-u$).

    Recall the $\Phi$ function from Equations (\ref{eq:Phi-u-1}) and (\ref{eq:Phi-u-2}). (As a reminder, $\Phi_u:(0,\infty) \times \RR \af \RR$ is defined so that $\Phi_u(x,y)$ always agrees with the argument of $F_u(x+iy)$ modulo $2\pi$.) Note that for each $u$, we have that $\Phi_u$ is a $\mathcal{C}^\infty$ function of $(x,y)$ (in fact, it is a real analytic function).

    If we plug in $v$ for $u$ in Equation (\ref{eq:Phi-u-1}), we obtain \begin{equation*}
        \Phi_v(x,y) = x - \arccot \frac{1-y}{x}
    \end{equation*} for all $(x,y) \in (0,\infty)\times\RR$. Furthermore, we note that by the definition of $y_v$, we have that $\Phi_v$ vanishes along $\gamma_v$ (see Equation (\ref{eq:Phi-x-y})). 

    Our general proof strategy will be to show that $\Phi_v$ is nondecreasing along $\gamma_u$. Combined with the fact that $\Phi_v(x,y)$ is a strictly decreasing function of $y$, we can conclude that $y_u \le y_v$ for all $x \in (0,\pi)$, as desired.

    We will have to take the derivative $\frac{\dd}{\dd x}(\Phi_v(x,y_u(x)))$ and show that it is nonnegative.
    
    Note that $\frac{\partial}{\partial x}\Phi_v(x,y) = 1 - \frac{1-y}{x^2+(1-y)^2}$ and $\frac{\partial}{\partial y}\Phi_v(x,y) = \frac{-x}{x^2+(1-y)^2}$. Now, for $x \in D_u$, we compute \begin{equation*}
        \begin{split}
            \frac{\dd}{\dd x} (\Phi_v(x,y_u(x))) &= \left(\frac{\partial}{\partial x} \Phi_v \right)(x,y_u) + y_u' \cdot \left(\frac{\partial}{\partial y} \Phi_v \right)(x,y_u) \\
            &= 1 - \frac{1-y_u}{x^2 + (1-y_u)^2} + y_u' \cdot \frac{-x}{x^2+(1-y_u)^2}
        \end{split}
    \end{equation*} and since $\frac{-x}{x^2+(1-y_u)^2}$ is negative, we may use Equation (\ref{eq:u-diff}) to write \begin{equation*}
        \begin{split}
            \frac{\dd}{\dd x} (\Phi_v(x,y_u(x))) &\ge 1 - \frac{1-y_u}{x^2 + (1-y_u)^2} + \frac{-y_u + (x^2 + y_u^2)}{x} \cdot \frac{-x}{x^2+(1-y_u)^2} \\
            &= 1 - \frac{1-y_u}{x^2 + (1-y_u)^2} + \frac{y_u - x^2 - y_u^2}{x^2+(1-y_u)^2} \\
            &= 1 + \frac{-1+2y_u - y_u^2 - x^2}{x^2+(1-y_u)^2} \\
            &= 0.
        \end{split}
    \end{equation*} We conclude that when $x \in D_u$, the function $\Phi_v(x,y_u(x))$ is a nondecreasing function of $x$. Since \\ $\lim_{x \searrow 0} y_u(x) = 0$, we obtain $\lim_{x \searrow 0} \Phi_v(x,y_u(x)) = 0$, from which it follows that \[\Phi_v(x,y_u(x)) \ge 0\] for all $x \in D_u$.

    As observed above, we have $\Phi_v(x,y_v(x)) = 0$ for all $x \in (0,\pi)$. So \[\Phi_v(x,y_u(x)) \ge \Phi_v(x,y_v(x))\] for all $x \in (0,\pi)$. Since $\Phi_v(x,y)$ is a strictly decreasing function of $y$, we must have \[y_u(x) \le y_v(x),\] as desired.
\end{proof}

We turn our attention to Equation (\ref{eq:log-ineq}). For $x \in D_u$, we compute \begin{equation*}
    \begin{split}
        &\qquad \frac{\dd}{\dd x}\tilde{F}_u(x)\\ &= \tilde{F}_u(x) \cdot \left( \sum_{j=1}^{n+1} iu_j\left(1+iy_u'\right) - \frac{iu_j(1+iy_u')}{1+iu_j(x+iy_u)} \right) \\
        &= \tilde{F}_u(x) \cdot \sum_{j=1}^{n+1} \frac{-u_j^2(x+iy_u)(1+iy_u')}{1+iu_j(x+iy_u)} \\
        &= \tilde{F}_u(x) \cdot \left( \sum_{j=1}^{n+1} \frac{-u_j^2(x+iy_u)}{1-u_jy_u+iu_jx} + y_u' \cdot \sum_{j=1}^{n+1}  \frac{-iu_j^2(x+iy_u)}{1-u_jy_u+iu_jx}\right) \\
        &= \tilde{F}_u(x) \cdot \left( \sum_{j=1}^{n+1} \frac{-x+i(-y_u+u_j\cdot (x^2+y_u^2))}{x^2+\left(\frac{1}{u_j}-y_u\right)^2} + y_u' \cdot \sum_{j=1}^{n+1}  \frac{-(-y_u+u_j\cdot (x^2+y_u^2))-ix}{x^2+\left(\frac{1}{u_j}-y_u\right)^2}\right) \\
        &= \tilde{F}_u(x) \cdot \left( \sum_{j=1}^{n+1} \frac{-x}{x^2+\left(\frac{1}{u_j}-y_u\right)^2} + y_u' \cdot \sum_{j=1}^{n+1}  \frac{-(-y_u+u_j\cdot (x^2+y_u^2))}{x^2+\left(\frac{1}{u_j}-y_u\right)^2}\right) \\
    \end{split}
\end{equation*} where the last step follows from Equation (\ref{eq:diff-y}) (namely, we cancelled the imaginary part). Using Equation (\ref{eq:diff-y}) again, we see that 
\begin{equation}\label{eq:diff-log-u}\frac{\dd}{\dd x}\log \tilde{F}_u(x) = \left.\frac{\dd}{\dd x}\tilde{F}_u(x) \right/ \tilde{F}_u(x) = -\frac{\left(\sum_{j=1}^{n+1}\frac{x}{x^2+\left(\frac{1}{u_j}-y_u\right)^2}\right)^2 + \left( \sum_{j=1}^{n+1}\frac{-y_u+ u_j \cdot (x^2+ y_u^2)}{x^2+\left(\frac{1}{u_j}-y_u\right)^2}\right)^2}{\sum_{j=1}^{n+1}\frac{x}{x^2+\left(\frac{1}{u_j}-y_u\right)^2}}\end{equation} holds for all $x \in D_u$.

In particular, by substituting $u = v$ we obtain \begin{equation}\label{eq:diff-log-v}\frac{\dd}{\dd x} \log \tilde{F}_v(x) = -\frac{x^2+y_v^2}{x}\end{equation} for all $x \in (0,\pi)$.


We are ready to use Equations (\ref{eq:diff-log-u}) and (\ref{eq:diff-log-v}) to prove Equation (\ref{eq:log-ineq}). \textbf{\textit{The crux is to use the Cauchy-Schwarz inequality to obtain the following curious inequalities:}}  \begin{equation}\label{eq:cauchy-1}
    \begin{split}
        \left(\sum_{j=1}^{n+1}\frac{x}{x^2+\left(\frac{1}{u_j}-y_u\right)^2}\right)^2 &= \left(\sum_{j=1}^{n+1}\frac{(x/u_j) \cdot u_j}{x^2+\left(\frac{1}{u_j}-y_u\right)^2}\right)^2\\
        &\le \left( \sum_{j=1}^{n+1}\frac{(x/u_j)^2 }{\left(x^2+\left(\frac{1}{u_j}-y_u\right)^2\right)^2} \right) \left( \sum_{j=1}^{n+1} u_j^2 \right) \\
        &= \sum_{j=1}^{n+1}\frac{(x/u_j)^2 }{\left(x^2+\left(\frac{1}{u_j}-y_u\right)^2\right)^2}
    \end{split}
\end{equation} and 
\begin{equation}\label{eq:cauchy-2}
    \begin{split}
        \left( \sum_{j=1}^{n+1}\frac{-y_u+ u_j \cdot (x^2+ y_u^2)}{x^2+\left(\frac{1}{u_j}-y_u\right)^2}\right)^2 &= \left( \sum_{j=1}^{n+1}\frac{(-y_u/u_j + x^2+ y_u^2) \cdot u_j}{x^2+\left(\frac{1}{u_j}-y_u\right)^2}\right)^2 \\
        &\le \left( \sum_{j=1}^{n+1} \frac{(-y_u/u_j + x^2+ y_u^2)^2}{\left(x^2+\left(\frac{1}{u_j}-y_u\right)^2\right)^2} \right) \left( \sum_{j=1}^{n+1} u_j^2 \right) \\
        &= \sum_{j=1}^{n+1} \frac{(-y_u/u_j + x^2+ y_u^2)^2}{\left(x^2+\left(\frac{1}{u_j}-y_u\right)^2\right)^2}.
    \end{split}
\end{equation} Thus \begin{equation*}
    \begin{split}
        & \left(\sum_{j=1}^{n+1}\frac{x}{x^2+\left(\frac{1}{u_j}-y_u\right)^2}\right)^2 + \left( \sum_{j=1}^{n+1}\frac{-y_u+ u_j \cdot (x^2+ y_u^2)}{x^2+\left(\frac{1}{u_j}-y_u\right)^2}\right)^2 \\
        &\le \sum_{j=1}^{n+1}\frac{(x/u_j)^2 }{\left(x^2+\left(\frac{1}{u_j}-y_u\right)^2\right)^2} + \sum_{j=1}^{n+1} \frac{(-y_u/u_j + x^2+ y_u^2)^2}{\left(x^2+\left(\frac{1}{u_j}-y_u\right)^2\right)^2} \\
        &= \sum_{j=1}^{n+1} \frac{(x/u_j)^2 + (-y_u/u_j + x^2+ y_u^2)^2}{\left(x^2+\left(\frac{1}{u_j}-y_u\right)^2\right)^2} \\
        &= \sum_{j=1}^{n+1} \frac{(x^2 + y_u^2) \cdot \left(x^2+\left(\frac{1}{u_j}-y_u\right)^2\right)}{\left(x^2+\left(\frac{1}{u_j}-y_u\right)^2\right)^2}\\
        &= \sum_{j=1}^{n+1} \frac{x^2 + y_u^2}{x^2+\left(\frac{1}{u_j}-y_u\right)^2}.
    \end{split}
\end{equation*} Plugging this into Equation (\ref{eq:diff-log-u}) yields \begin{equation*}
    \begin{split}
        \frac{\dd}{\dd x}\log \tilde{F}_u(x) &\ge -\frac{\sum_{j=1}^{n+1}\frac{x^2 + y_u^2}{x^2+\left(\frac{1}{u_j}-y_u\right)^2}}{\sum_{j=1}^{n+1} \frac{x}{x^2+\left(\frac{1}{u_j}-y_u\right)^2}} \\
        &= -\frac{x^2 + y_u^2}{x} \\
        &\ge -\frac{x^2 + y_v^2}{x} \\
    \end{split}
\end{equation*} for all $x \in (0,\pi)$ because $-y_v \le y_u \le y_v$ holds for all $x \in (0,\pi)$ (from Lemma (\ref{lem:vuv-bound})). Combining this with Equation (\ref{eq:diff-log-v}) yields \[\frac{\dd}{\dd x}\log \tilde{F}_u(x) \ge \frac{\dd}{\dd x}\log \tilde{F}_v(x)\] which is exactly Equation (\ref{eq:log-ineq}), as desired. This proves Theorem \ref{thm:pointwise}.

\section{A final question}
We showed that $H_\text{facet}$ is the smallest central slice, but only up to a factor of $1-o(1)$.

\begin{question}
    Can we remove the factor of $1-o(1)$, so that we can say that $\vol_{n-1}(H_\mathrm{facet} \cap \Delta_n)$ is exactly the minimum value of a central section?
\end{question}

\section{Acknowledgements}
I thank Tomasz Tkocz (Carnegie Mellon University) for introducing me to this problem, and for many stimulating discussions.

I was supported by the ARCS Foundation Scholar Award.

I relied on Wolfram Mathematica Student Edition to test various hypotheses, and also to create some of the figures in this paper.

\bibliographystyle{alphaurl}
\bibliography{minimizer}

\end{document}